\newcommand{\de}{\mathrm{d}}
\newcommand{\R}{\mathbb{R}}
\newcommand{\C}{\mathbb{C}}
\newcommand{\tha}{\tfrac{1}{2}}
\newcommand{\be}{\begin{equation}}
\newcommand{\bey}{\begin{eqnarray}}
\newcommand{\ee}{\end{equation}}
\newcommand{\eey}{\end{eqnarray}}
\newcommand{\ba}{\begin{array}}
\newcommand{\ea}{\end{array}}
\newcommand{\bm}[1]{\mbox{\boldmath{$#1$}}}
\theoremstyle{plain}                    
\newtheorem{theorem}{Theorem}[section]
\newtheorem{lem}[theorem]{Lemma}            
\newtheorem{prop}[theorem]{Proposition}
\newtheorem{cor}[theorem]{Corollary}
\theoremstyle{definition}
\newtheorem{remark}[theorem]{Remark}
\begin{document}
\title{Smooth Sums over Smooth $k$-Free Numbers\\ and Statistical Mechanics 
}
\author{Francesco Cellarosi$^{*}$ 
}
\maketitle

\let\oldthefootnote\thefootnote
\renewcommand{\thefootnote}{\fnsymbol{footnote}}
\footnotetext[1]{University of Illinois, Urbana-Champaign, IL, U.S.A. Email: \texttt{fcellaro@illinois.edu}}
\let\thefootnote\oldthefootnote

\begin{abstract}
We provide an asymptotic estimate for certain sums over $k$-free integers with small prime factors. These sums depend upon a complex parameter $\alpha$ and involve a smooth cut-off $f$. They are a variation of several classical number-theoretical sums. One term in the asymptotics is an integral operator whose kernel is the $\alpha$-convolution of the Dickman-de Bruijn distribution, and the other term is explicitly estimated. The trade-off between the value of $\alpha$ and the regularity of $f$ is discussed. This work generalizes the results of \cite{Cellarosi-Sinai-Mobius-2, Cellarosi-Sinai-Mobius-1}, where $k=2$ and $\alpha=1$.
\end{abstract}

\begin{keywords} 
$k$-free numbers, smooth-numbers, average order of arithmetic functions, convolutions of the Dickman-De Bruijn distribution, weak convergence of complex measures. {\bf MSC}: 11N37, 11K65, 60B10, 60F05.
\end{keywords}

\section{Introduction}
The study of the typical behavior of arithmetic functions has a long history in number theory. Let $n$ denote a positive integer. 
Let $\omega(n)$ (resp. $\Omega(n)$) denote the number of prime divisors of $n$, counted without (resp. with) multiplicity.
If $d(n)$ denotes the number of divisors of $n$, then clearly $\Omega(n)\leq\log n/\log 2$, and $2^{\omega(n)}\leq d(n)\leq 2^{\Omega(n)}\leq n$. Notice that $2^{\omega(n)}$ equals the number of square-free divisors of $n$.

We are interested in $k$-free numbers, i.e. integers such that $p^k\nmid n$ for every prime $p$. Notice that for $k=2$ the set of square-free numbers is characterized by the condition $\Omega(n)=\omega(n)$. 
This paper is devoted to the study of the asymptotic behavior of certain sums over $k$-free numbers, with an additional restriction on the size of their prime factors.
Let us fix an integer $k\geq2$, $\alpha\in\C$, and a bounded function $f:\R\to\C$. Define the sum 
\bey
S_{\Omega,f}(k,\alpha;N)&=&\sum_{\scriptsize{\ba{c} 
\mbox{$n$ $k$-free}\\ p|n\Rightarrow p\leq N \ea}}f\!\left(\frac{\log n}{\log N}\right)\frac{\alpha^{\Omega(n)}}{n}.\label{two-main-sums-1}
\eey

We shall refer to (\ref{two-main-sums-1}) as a \emph{smooth} sum because the regularity of $f$ plays an important role in our analysis. The $k$-free numbers involved in the sum classically called \emph{smooth} because their prime factors are considerably smaller then the numbers themselves.

We shall always assume that $\alpha\neq0$, otherwise the sum (\ref{two-main-sums-1}) is identically zero.
Let us define the function space 
\be\mathcal S_\eta(\R)=\left\{f:\R\to\C 
:\: \exists  
C>0\:\: \mbox{s.t.}\:\: 
\left|\hat f(\lambda)\right|\leq \frac{C}{1+|\lambda|^\eta}\:\:\forall\lambda\in\R\right\},\nonumber\ee 
where $\hat f$ is the Fourier transform of $f$. 
Notice that, for example, $\textbf{1}_{[0,1]}\in \mathcal S_1$ and that the Schwartz space $\mathcal S\subset \mathcal S_\eta$ for every $\eta$.
The main result of our paper is the following
\begin{theorem}\label{thm-1}
Let $|\alpha|<2$ and let $f\in \mathcal S_\eta$ with $\eta >|\alpha|-\Re \alpha+1$. Then
there exists a non-zero constant $C=C(k,\alpha)\in\C$ such that, for every $R=R(N)$ satisfying 
\be \frac{R}{\log N}\to0\hspace{.5cm}\mbox{and}\hspace{.5cm} \frac{(\log N)^{|\alpha|-\Re\alpha}}{R^{\eta-1}}\to0\hspace{.5cm}\mbox{as $N\to\infty$},\nonumber 
\ee
we have
\be
S_{\Omega,f}(k,\alpha;N)=C\cdot(\log N)^\alpha\cdot \left(\,\int_{|\lambda|\leq R} \varphi^{(\alpha)}(\lambda)\hat f(\lambda)\de\lambda+\varepsilon_N 
\right),\label{statement-thm-1}
\ee
where 
\be
\varphi^{(\alpha)}(\lambda)=\exp\left\{\alpha\int_0^1 \frac{e^{i\lambda v}-1}{v}\de v\right\}\label{statement-thm-2}
\ee and $\varepsilon_N=\varepsilon_N(k,\alpha,f,R)\to0$ as $N\to\infty$ satisfies
\be\varepsilon_N=O\!\left(\frac{\log\log N}{\log N}\right)+O\!\left(\frac{(\log N)^{|\alpha|-\Re\alpha}}{R^{\eta-1}}\right).\label{epsilon-in-main-theorem}\ee
The constants implied by the $O$-notation in (\ref{epsilon-in-main-theorem}) depend on $k$, $\alpha$, $f$, and $R$.
\end{theorem}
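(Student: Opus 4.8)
\medskip
\noindent\emph{Proof proposal.} The plan is to linearise the cut-off by Fourier inversion, reduce the sum to an integral of a finite Euler product, and then extract the asymptotics of that product by Mertens' theorem and the prime number theorem. Since $\eta>1$ we have $\hat f\in L^1(\R)$ (so $f$ is continuous) and $f\!\left(\frac{\log n}{\log N}\right)=\frac{1}{2\pi}\int_\R\hat f(\lambda)\,n^{i\lambda/\log N}\,\de\lambda$. Substituting this into \eqref{two-main-sums-1} and interchanging the (finite) sum over $n$ with the $\lambda$-integral yields
\be
S_{\Omega,f}(k,\alpha;N)=\frac{1}{2\pi}\int_\R\hat f(\lambda)\,G_N(\lambda)\,\de\lambda,
\label{pf-reduction}
\ee
where $G_N(\lambda)=\sum\alpha^{\Omega(n)}n^{-s}$, the sum ranging over the same $n$ as in \eqref{two-main-sums-1} and $s=1-i\lambda/\log N$. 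Because $n$ is $k$-free and $a\mapsto\alpha^{a}$ is multiplicative on prime powers, $G_N(\lambda)=\prod_{p\le N}\bigl(1+\alpha p^{-s}+\dots+\alpha^{k-1}p^{-(k-1)s}\bigr)=\prod_{p\le N}\frac{1-(\alpha p^{-s})^{k}}{1-\alpha p^{-s}}$, a finite product; since $|\alpha|<2$ we have $|\alpha p^{-s}|=|\alpha|/p<1$ for every prime $p$, so no factor vanishes and all logarithms below are defined.

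Next I would find the asymptotics of $G_N(\lambda)$ for $|\lambda|\le R$. Taking logarithms and using $-\log(1-\alpha p^{-s})=\alpha p^{-s}+\sum_{m\ge2}\frac{(\alpha p^{-s})^{m}}{m}$, the sums $\sum_{p\le N}\sum_{m\ge2}\frac{(\alpha p^{-s})^{m}}{m}$ and $\sum_{p\le N}\log\bigl(1-(\alpha p^{-s})^{k}\bigr)$ converge, as $N\to\infty$, to constants depending only on $k$ and $\alpha$, with error $O(R/\log N)$ uniformly for $|\lambda|\le R$ (split the primes at a slowly growing threshold and use $|e^{i\theta}-1|\le\min\{2,|\theta|\}$). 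For the remaining term write $\alpha\sum_{p\le N}p^{-s}=\alpha\sum_{p\le N}\frac{1}{p}+\alpha\sum_{p\le N}\frac{p^{i\lambda/\log N}-1}{p}$: Mertens' theorem $\sum_{p\le N}\frac1p=\log\log N+M+O(1/\log N)$ produces the factor $(\log N)^{\alpha}$, while Abel summation against $t\mapsto\sum_{p\le t}\frac1p$, using the prime number theorem, turns the second sum into $\int_0^1\frac{e^{i\lambda v}-1}{v}\,\de v$ up to a controlled error. Exponentiating gives, for $|\lambda|\le R$,
\be
G_N(\lambda)=C_0\,(\log N)^{\alpha}\,\varphi^{(\alpha)}(\lambda)\bigl(1+\mathrm{err}(\lambda,N)\bigr),
\label{pf-GN}
\ee
with $\varphi^{(\alpha)}$ as in \eqref{statement-thm-2}, a non-zero constant $C_0=C_0(k,\alpha)$ absorbing $e^{\alpha M}$ and the two convergent Euler corrections (the product $\prod_p(1-\alpha^{k}p^{-k})$ being non-zero because $|\alpha|<2$), and $\mathrm{err}(\lambda,N)$ built explicitly from the errors above.

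For the complementary range $|\lambda|>R$ the crude bound $|G_N(\lambda)|\le\exp\bigl(\sum_{p\le N}\Re[-\log(1-\alpha p^{-s})]+O(1)\bigr)\le\exp\bigl(|\alpha|\sum_{p\le N}\frac1p+O(1)\bigr)\ll(\log N)^{|\alpha|}$ suffices, which with $|\hat f(\lambda)|\le C|\lambda|^{-\eta}$ gives $\bigl|\int_{|\lambda|>R}\hat f\,G_N\bigr|\ll(\log N)^{|\alpha|}R^{1-\eta}$. Putting $C=C_0/(2\pi)$ and combining \eqref{pf-reduction}, \eqref{pf-GN} and this tail bound yields \eqref{statement-thm-1} with $\varepsilon_N=\int_{|\lambda|\le R}\hat f(\lambda)\varphi^{(\alpha)}(\lambda)\,\mathrm{err}(\lambda,N)\,\de\lambda+O\bigl((\log N)^{|\alpha|-\Re\alpha}R^{1-\eta}\bigr)$; the first integral, estimated via $|\varphi^{(\alpha)}(\lambda)|\asymp(1+|\lambda|)^{-\Re\alpha}$, $|\hat f(\lambda)|\ll(1+|\lambda|)^{-\eta}$ and the shape of $\mathrm{err}$, contributes the $O(\log\log N/\log N)$ term of \eqref{epsilon-in-main-theorem}. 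The two conditions imposed on $R$, together with $R\le\log N$, are compatible exactly when $\eta>|\alpha|-\Re\alpha+1$, which is also precisely what makes $\int_\R\varphi^{(\alpha)}\hat f$ absolutely convergent, so that extending $\int_{|\lambda|\le R}$ to $\int_\R$ would cost only $O(R^{1-\Re\alpha-\eta})$, again absorbed into $\varepsilon_N$. The non-vanishing of $C=C_0/(2\pi)$ is then immediate.

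I expect the main obstacle to lie in the last part of this argument: producing a form of $\mathrm{err}(\lambda,N)$ that is small \emph{uniformly} over the growing window $|\lambda|\le R$ and that, once integrated against $\hat f\,\varphi^{(\alpha)}$, collapses to $O(\log\log N/\log N)$. This forces a careful balancing of the decay rate of $\hat f$ (via $\eta$), the decay of $\varphi^{(\alpha)}$ (via $\Re\alpha$), and the admissible growth of $R$, and it is exactly here that the inequality $\eta>|\alpha|-\Re\alpha+1$ and the allowed range of $R$ come into play.
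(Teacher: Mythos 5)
Your approach is, underneath the notation, the same as the paper's: your identity expressing $S_{\Omega,f}$ as $\frac{1}{2\pi}\int\hat f(\lambda)G_N(\lambda)\,\de\lambda$ is exactly the paper's $S_{\Omega,f}(k,\alpha;N)=Z_{\Omega,N}^{(k,\alpha)}\int_\R\varphi_{\Omega,N}^{(k,\alpha)}(\lambda)\hat f(\lambda)\,\de\lambda$ (your $G_N(\lambda)$ is $Z_{\Omega,N}^{(k,\alpha)}\varphi_{\Omega,N}^{(k,\alpha)}(\lambda)$); your Euler-product asymptotics for $G_N$ combine Lemma \ref{lem-asymptotic-ZNkq} (the case $\lambda=0$) with Theorem \ref{pw-convergence-varphi_xi_N}; and your crude tail bound $|G_N(\lambda)|\ll(\log N)^{|\alpha|}$ is the paper's estimate (\ref{trivial-estimate-varphi}). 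You also identify correctly why $|\alpha|<2$ keeps every Euler factor away from zero and why $\eta>|\alpha|-\Re\alpha+1$ is exactly the compatibility condition on $R$.

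The one genuine gap is the step you yourself flag: the quantitative form of $\mathrm{err}(\lambda,N)$. Two points. First, a bound that is merely ``uniform $O(R/\log N)$ for $|\lambda|\le R$'' cannot close the argument: integrating such a bound against $|\hat f\varphi^{(\alpha)}|$, whose total mass is $O(1)$, yields only $O(R/\log N)$, which for admissible choices such as $R=\log N/\log\log N$ is of size $1/\log\log N$, far larger than the claimed $O(\log\log N/\log N)$. One must therefore keep the error as a function of $\lambda$ and let the decay of $\hat f\varphi^{(\alpha)}$ suppress it at large $\lambda$. Second, the precise pointwise form matters: Theorem \ref{pw-convergence-varphi_xi_N} shows the relative error is $O(1/\log N)+O(\varepsilon\log|\varepsilon|)$ with $\varepsilon=\lambda/\log N$, and it is the factor $\log|\varepsilon|$, evaluated where $|\lambda|\asymp1$ and $\hat f\varphi^{(\alpha)}$ carries $O(1)$ mass, that produces the $\log\log N$ in (\ref{epsilon-in-main-theorem}). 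Establishing this --- via Abel summation on $\log G_N$ and the explicit estimation of all resulting boundary and integral terms through $\mathrm{Ei}$, $\mathrm{Ci}$, $\mathrm{Si}$ and incomplete gamma functions --- occupies Section \ref{section-proof-of-theorem-pw...} and Appendix A, i.e.\ the technical core of the paper; your ``controlled error'' and ``shape of $\mathrm{err}$'' stand in for all of it. The remaining bookkeeping, including the case analysis needed to absorb the term $O\!\left(R^{2-\Re\alpha-\eta}\log(\log N/R)/\log N\right)$ into the two stated error terms, is carried out in Proposition \ref{prop-varphi-tau} and matches your sketch.
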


\begin{remark}
The function $\varphi^{(\alpha)}(\lambda)$ in (\ref{statement-thm-2}) is the characteristic function (inverse Fourier transform) of the $\alpha$-convolution of the Dickman-de Bruijn probability distribution on $\R_{\geq0}$. Notice that $\varphi^{(\alpha)}(\lambda)$ need not be bounded.
\end{remark}
\begin{remark}
The integral in (\ref{statement-thm-1}) is $O(1)$. However --depending on the function $f$-- it may tend to zero as $N\to\infty$ and, if this is the case, it may be dominated by the error term $\varepsilon_N$. In Section \ref{section-example} we discuss a concrete example where the integral term is bounded away from zero.
A recent preprint by M. Avdeeva, D. Li and Ya.G. Sinai \cite{Avdeeva-Li-Sinai} gives new information on the integral term in (\ref{statement-thm-1}) when $\alpha$ is a negative real number.
\end{remark}
\begin{remark}
Our methods allow us to enlarge the set of $\alpha\in\C$ for which Theorem \ref{thm-1} holds, provided we modify (\ref{two-main-sums-1}) by considering the sum only over the $k$-free integers that are not divisible by a finite number of primes. For example, (\ref{statement-thm-1}) holds for smooth sums over smooth \emph{odd} $k$-free integers for $|\alpha|<3$.
\end{remark}

Theorem \ref{thm-1} shows that there is a competition between the magnitude of $R(N)$ and the regularity parameter $\eta$ for the function $f$. Two natural choices for $R(N)$ are $\log N/\log\log N$ and $(\log N)^\tau$, $0<\tau<1$, and are covered by the following corollaries.

\begin{cor}\label{cor-1}
Let $R(N)=\frac{\log N}{\log\log N}$ in Theorem \ref{thm-1}. Then
\be
\varepsilon_N=\begin{cases}
O\!\left(\frac{\log\log N}{\log N}\right),&\mbox{if $\eta>|\alpha|-\Re\alpha+2$};\\
O\!\left(\frac{(\log\log N)^{\eta-1}}{(\log N)^{\eta-(|\alpha|-\Re\alpha+1)}}\right),&\mbox{if $|\alpha|-\Re\alpha<\eta\leq |\alpha|-\Re\alpha+2$}.
\end{cases}\nonumber
\ee
\end{cor}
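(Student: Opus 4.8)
\emph{Proof proposal.}
The plan is to specialize Theorem~\ref{thm-1} to the choice $R(N)=\frac{\log N}{\log\log N}$; the corollary is then pure bookkeeping, namely identifying which of the two terms in (\ref{epsilon-in-main-theorem}) dominates.

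First I would verify that this $R(N)$ meets the two hypotheses of Theorem~\ref{thm-1}. The condition $R/\log N=1/\log\log N\to0$ is immediate. For the second condition, substitution gives
\[
\frac{(\log N)^{|\alpha|-\Re\alpha}}{R^{\eta-1}}=\frac{(\log\log N)^{\eta-1}}{(\log N)^{\,\eta-(|\alpha|-\Re\alpha+1)}},
\]
and since the standing assumption $\eta>|\alpha|-\Re\alpha+1$ forces the exponent of $\log N$ to be strictly positive, this tends to $0$ (a positive power of $\log N$ beats any power of $\log\log N$). Hence Theorem~\ref{thm-1} applies with this $R$, and
\[
\varepsilon_N=O\!\left(\frac{\log\log N}{\log N}\right)+O\!\left(\frac{(\log\log N)^{\eta-1}}{(\log N)^{\,\eta-(|\alpha|-\Re\alpha+1)}}\right).
\]

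It then remains to compare the two $O$-terms, which I would do by examining their ratio $\frac{(\log\log N)^{\eta-2}}{(\log N)^{\,\eta-(|\alpha|-\Re\alpha+2)}}$. When $\eta>|\alpha|-\Re\alpha+2$ the exponent of $\log N$ is positive, so the second term is $o$ of the first and $\varepsilon_N=O\!\left(\frac{\log\log N}{\log N}\right)$. When $|\alpha|-\Re\alpha<\eta\le|\alpha|-\Re\alpha+2$ I would instead use the reciprocal ratio $\frac{(\log\log N)^{2-\eta}}{(\log N)^{\,(|\alpha|-\Re\alpha+2)-\eta}}$: for $\eta<|\alpha|-\Re\alpha+2$ its $\log N$-exponent is strictly positive, so the first term is $o$ of the second, and at the endpoint $\eta=|\alpha|-\Re\alpha+2$ one checks directly that the second term is the larger one because $|\alpha|-\Re\alpha\ge0$. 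In all these cases $\varepsilon_N=O\!\left(\frac{(\log\log N)^{\eta-1}}{(\log N)^{\,\eta-(|\alpha|-\Re\alpha+1)}}\right)$, which is the stated dichotomy.

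I do not expect a genuine obstacle: the corollary is an immediate consequence of Theorem~\ref{thm-1}, and the only point needing a little care is the endpoint $\eta=|\alpha|-\Re\alpha+2$, where one must confirm that the ``second'' error term still dominates — and this uses nothing more than $|\alpha|\ge\Re\alpha$.
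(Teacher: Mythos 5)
Your proposal is correct and is exactly the intended argument: the paper states Corollary \ref{cor-1} without a separate proof, treating it as immediate substitution of $R(N)=\log N/\log\log N$ into (\ref{epsilon-in-main-theorem}) followed by comparison of the two error terms, which is what you do (including the right handling of the endpoint $\eta=|\alpha|-\Re\alpha+2$ via $|\alpha|\ge\Re\alpha$). The only caveat, which is an imprecision in the paper's statement rather than in your proof, is that the second case as printed nominally allows $\eta\le|\alpha|-\Re\alpha+1$, where Theorem \ref{thm-1} itself does not apply; your argument correctly operates within the theorem's hypothesis $\eta>|\alpha|-\Re\alpha+1$.
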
 

\begin{cor}\label{cor-2}
Let $R(N)=(\log N)^{1-\tau}$ in Theorem \ref{thm-1}. Then
\be
\varepsilon_N=\begin{cases}
O\!\left(\frac{\log\log N}{\log N}\right),&\mbox{if $0<\tau\leq\frac{\eta-(|\alpha|-\Re\alpha+2)}{\eta-1}$ -- case (a)};\\
O\!\left((\log N)^{-\eta(1-\tau)-\tau+(|\alpha|-\Re\alpha+1)}\right),&\mbox{if $0\vee\frac{\eta-(|\alpha|-\Re\alpha+2)}{\eta-1}<\tau<\frac{\eta-(|\alpha|-\Re\alpha+1)}{\eta-1}$ -- case (b)}.
\end{cases}\nonumber
\ee
\end{cor}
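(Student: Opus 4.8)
The plan is to obtain Corollary \ref{cor-2} as a direct specialization of Theorem \ref{thm-1} with the choice $R(N)=(\log N)^{1-\tau}$; the constant $C=C(k,\alpha)$ and the main integral term in (\ref{statement-thm-1}) then carry over unchanged, so only the error bound (\ref{epsilon-in-main-theorem}) needs to be analyzed. First I would check that this $R$ is admissible, i.e.\ that it satisfies the two hypotheses of Theorem \ref{thm-1}. The condition $R/\log N\to0$ holds because $R/\log N=(\log N)^{-\tau}$ and $\tau>0$. For the condition $(\log N)^{|\alpha|-\Re\alpha}/R^{\eta-1}\to0$, I would write $R^{\eta-1}=(\log N)^{(1-\tau)(\eta-1)}$, so that this ratio equals $(\log N)^{\delta}$ with $\delta=(|\alpha|-\Re\alpha)-(1-\tau)(\eta-1)$; since $\eta-1>0$ (as $\eta>|\alpha|-\Re\alpha+1\ge1$), rearranging gives $\delta<0\iff\tau<\frac{\eta-(|\alpha|-\Re\alpha+1)}{\eta-1}$, which is exactly the upper bound on $\tau$ imposed in both case (a) and case (b). Hence Theorem \ref{thm-1} applies throughout the stated range of $\tau$.

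Next I would substitute into (\ref{epsilon-in-main-theorem}). Setting $\beta:=\eta(1-\tau)+\tau-(|\alpha|-\Re\alpha+1)=(\eta-1)(1-\tau)-(|\alpha|-\Re\alpha)=-\delta>0$, Theorem \ref{thm-1} gives
\[
\varepsilon_N=O\!\left(\frac{\log\log N}{\log N}\right)+O\!\left((\log N)^{-\beta}\right),
\]
and it remains only to decide which term dominates. If $\beta\ge1$ then $(\log N)^{-\beta}=O((\log N)^{-1})=O((\log N)^{-1}\log\log N)$, so the first term absorbs the second and $\varepsilon_N=O(\log\log N/\log N)$. If $0<\beta<1$ then $\log\log N=o\bigl((\log N)^{1-\beta}\bigr)$, so $(\log N)^{-1}\log\log N=o\bigl((\log N)^{-\beta}\bigr)$ and the second term dominates, giving $\varepsilon_N=O\bigl((\log N)^{-\beta}\bigr)$; unwinding $-\beta=-\eta(1-\tau)-\tau+(|\alpha|-\Re\alpha+1)$ recovers the exponent in the statement. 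Finally, a one-line computation shows $\beta\ge1\iff\tau\le\frac{\eta-(|\alpha|-\Re\alpha+2)}{\eta-1}$, which is case (a), and $\beta<1\iff\tau>\frac{\eta-(|\alpha|-\Re\alpha+2)}{\eta-1}$; intersecting the latter with the admissibility range $\tau<\frac{\eta-(|\alpha|-\Re\alpha+1)}{\eta-1}$ and with $\tau>0$ produces case (b), with lower endpoint $0\vee\frac{\eta-(|\alpha|-\Re\alpha+2)}{\eta-1}$, since the numerator $\eta-(|\alpha|-\Re\alpha+2)$ is negative precisely when $|\alpha|-\Re\alpha+1<\eta\le|\alpha|-\Re\alpha+2$, in which event case (a) is vacuous.

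Since the whole argument reduces to elementary manipulation of the explicit bound (\ref{epsilon-in-main-theorem}), I do not expect any genuine obstacle. The only point requiring care is keeping the several linear inequalities in $\tau$ mutually consistent; conceptually, the threshold $\tau=\frac{\eta-(|\alpha|-\Re\alpha+2)}{\eta-1}$ separating cases (a) and (b) is exactly the value of $\tau$ at which the power $(\log N)^{-\beta}$ crosses the benchmark $(\log N)^{-1}$, while the upper threshold $\tau=\frac{\eta-(|\alpha|-\Re\alpha+1)}{\eta-1}$ is where $R^{\eta-1}$ ceases to be large enough for Theorem \ref{thm-1} to apply.
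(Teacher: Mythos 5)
Your argument is correct and is exactly the direct specialization of Theorem \ref{thm-1} that the paper intends; the paper itself gives no separate proof of Corollary \ref{cor-2}, treating it as an immediate consequence. You verify admissibility of $R=(\log N)^{1-\tau}$, identify $\beta=(\eta-1)(1-\tau)-(|\alpha|-\Re\alpha)$ as the decay exponent of the second error term, and correctly locate the crossover at $\beta=1$ (including the boundary case, since $(\log N)^{-1}=o(\log\log N/\log N)$, so $\beta=1$ falls into case (a), matching the $\leq$ there); the endpoint $0\vee\frac{\eta-(|\alpha|-\Re\alpha+2)}{\eta-1}$ and the upper admissibility threshold are both accounted for.
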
 
The two cases (a) and (b) are summarized in Figure \ref{fig-two-regions}, where the trade-off between $\tau$ and $\eta$ is apparent.
\begin{figure}[h!]
\begin{center}
\includegraphics[width=15cm]{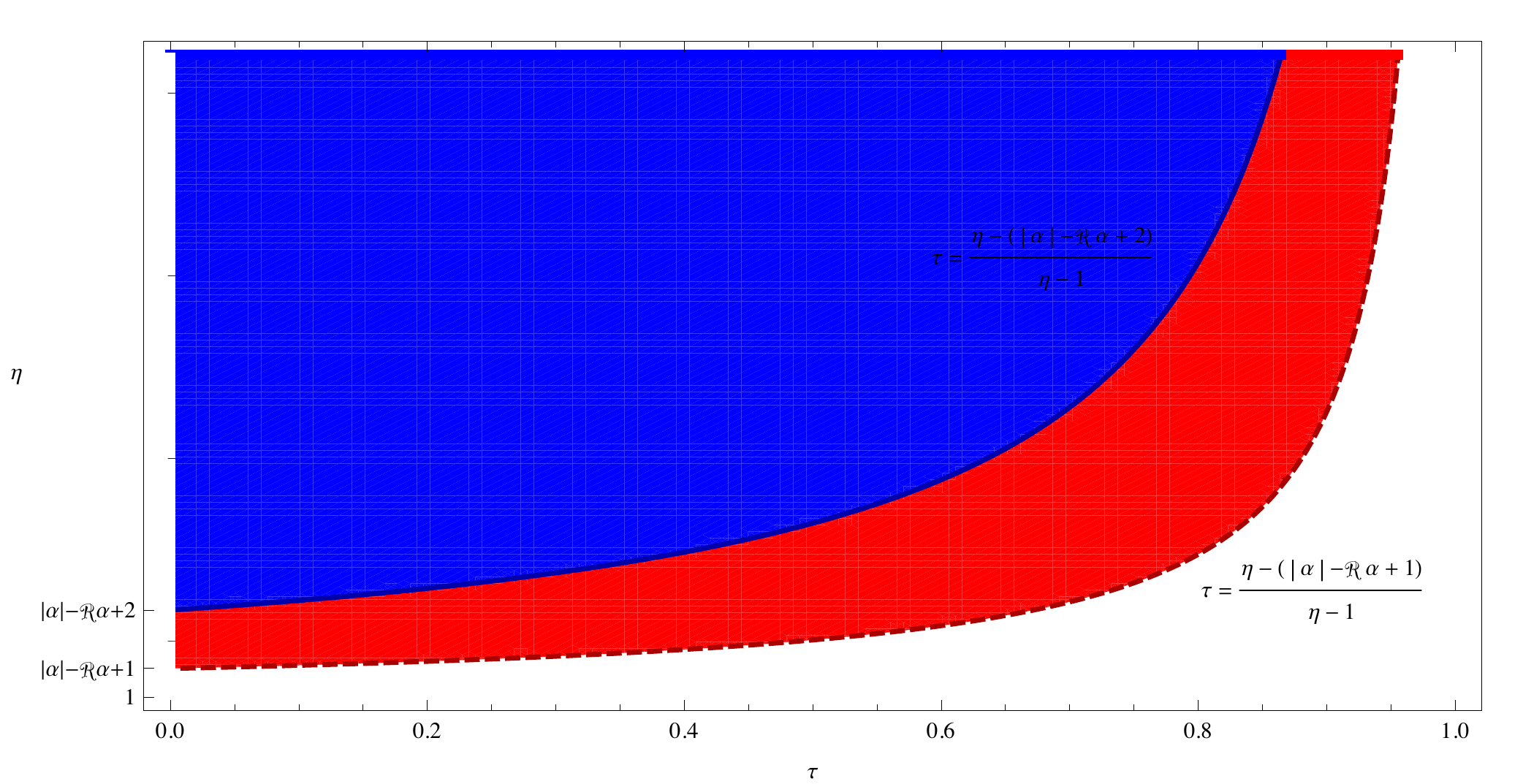}
\caption{Plot of the two regions (a) (blue) and (b) (red), wherein the error terms $O\!\left(\frac{\log\log N}{\log N}\right)$ and $O\!\left((\log N)^{-\eta(1-\tau)-\tau+(|\alpha|-\Re\alpha+1)}\right)$ in Corollary \ref{cor-2} 
dominate respectively. The boundary of the two regions are hyperbol\ae, whose equations are also shown.}\label{fig-two-regions}
\end{center}
\end{figure}

The paper is organized as follows. Section \ref{section-motivation} serves as context and motivation; it includes results concerning the average of certain arithmetic functions, some asymptotic results about smooth numbers, and convolutions of the Dickman-De Bruijn distribution. In Section \ref{section-the-ensemble} we introduce a complex measure and we rewrite the sum (\ref{two-main-sums-1}) by means of a random\footnote{We borrow the classical probabilistic terminology, although for most values of the parameter $\alpha$, we are not dealing with a probability measure.} variable, following ideas from Statistical Mechanics already used in \cite{Cellarosi-Sinai-Mobius-1, Cellarosi-Sinai-Mobius-2}. The main results in Section \ref{section-limit-thm} are Theorem \ref{pw-convergence-varphi_xi_N}, that is devoted to the pointwise approximation of the characteristic function of the above random variable, and Proposition \ref{prop-varphi-tau}, dealing with some integral estimates. Theorem \ref{thm-1} follows from these results.
The proof of Theorem \ref{pw-convergence-varphi_xi_N} occupies Section \ref{section-proof-of-theorem-pw...} and Appendix A.
Section \ref{section-example} discusses a concrete example where the function $f$ is $C^\infty$ with compact support.
\section*{Acknowledgments}
The author is grateful to Yakov G. Sinai, Dong Li, and Maria Avdeeva for useful discussions.
He also acknowledges the support of the Mathematical Sciences Research Institute in Berkeley CA, where most of this work was carried out in the Spring 2012 during the \emph{Random Spatial Processes} special program.
The author also wishes to thank A.J. Hildebrand for pointing out the references \cite{Tenenbaum-Wu-I, Hanrot-Tenenbaum-Wu-II, Tenenbaum-Wu-III, Tenenbaum-Wu-IV} to him, and for suggesting a way to improve the error term (\ref{epsilon-in-main-theorem}) in Theorem \ref{thm-1}.

\section{Motivation}\label{section-motivation}

\subsection{Normal and Average Orders or Arithmetic Functions}
Hardy and Ramanujan \cite{MR2280878} proved that the \emph{normal order} of $\omega(n)$ and $\Omega(n)$ is $\log\log n$, i.e.
for every $\varepsilon>0$, the set of $n\leq x$ such that the inequalities
\be (1-\varepsilon)\log\log n\leq \omega(n)\leq(1+\varepsilon)\log\log n\nonumber\ee
fail to hold has cardinality $o(x)$ as $x\to\infty$ (and the same statement is true for $\Omega(n)$ in place of $\omega(n)$).
Erd\H{o}s and Kac \cite{Erdos-Kac-1940} improved on this result and established a Central Limit Theorem:
\be 
\frac{1}{x}\left|\left\{n\leq x:\: a\leq\frac{\omega(n)-\log\log n}{\sqrt{\log\log n}}\leq b\right\}\right|=\frac{1}{\sqrt{2\pi}}\int_a^be^{-t^2/2}\de t+O\left(\frac{1}{\sqrt{\log\log x}}\right)\label{Erdos-Kac}.\ee

Dirichlet proved that the \emph{average order} order of $d(n)$ is $\log n$, i.e. $\sum_{n\leq x}d(n)\sim x\log x$. More precisely
\be \sum_{n\leq x}d(n)=x\log x+(2\gamma-1)x+\Delta(x),\nonumber\ee
where $\gamma$ is Euler-Mascheroni's constant and $\Delta(x)=O(x^{1/2})$. Finding the smallest $\theta>0$ such that $\Delta(x)=O(x^{\theta+\varepsilon})$ for every $\varepsilon>0$ is known as the \emph{Dirichlet divisor problem}. Several authors have improved Dirichlet's bound $\theta\leq \frac{1}{2}$ towards the conjectured valued $\theta=\frac{1}{4}$, the current record being $\theta\leq\frac{131}{416}$ (Huxley \cite{Huxley-1992}).

Mertens \cite{Mertens-1874-ein, Mertens-1874-ueber} proved that the average order of $2^{\omega(n)}$ is $\frac{1}{\zeta(2)}\log n$. More precisely
\be \sum_{n\leq x}2^{\omega(n)}=\frac{1}{\zeta(2)}x\log x+\left(\frac{2\gamma-1}{\zeta(2)}-\frac{2\zeta'(2)}{\zeta^2(2)}\right)x+\Delta^{(2)}(x),\label{average-order-2^omega-Mertens}\ee
where $\Delta^{(2)}(x)=O(x^{1/2}\log x)$.  It is also conjectured that $\Delta^{(2)}(x)=O(x^{\theta+\varepsilon})$ with $\theta=\frac{1}{4}$, and the best known result (assuming the Riemann Hypothesis) is $\theta\leq \frac{4}{11}$ (Baker \cite{MR1397933}).

Grosswald \cite{MR0074459} proved that the average order of $2^{\Omega(n)}$ is $A \log^2 n$, where $A
\approx0.27317$ is an explicit constant. More precisely
\be \sum_{n\leq x}2^{\Omega(n)}=A x\log^2 x+B x\log x+O(x),\label{average-order-2^Omega-Grosswald}\ee
where 
 $B$ is another explicit constant, and the error term is optimal (Bateman \cite{MR0091970}).
 
 It is interesting to generalize the sums (\ref{average-order-2^omega-Mertens}) and (\ref{average-order-2^Omega-Grosswald}) by replacing $2$ by an arbitrary complex number $z$. Moreover, one can restrict the summation to the set of square-free integers (characterized by the condition $\omega(n)=\Omega(n)$) by multiplying the summand by $\mu^2(n)$, where $\mu$ is the M\"obius function. Selberg \cite{MR0067143} proved that for every $z\in\C$, as $x\to\infty$
\bey
\sum_{n\leq x}z^{\omega(n)}&=&F(z)\,x(\log x)^{z-1}+O\left(x(\log x)^{\Re z-2}\right),\label{Selberg-1}\\
\sum_{n\leq x}\mu^2(n) z^{\omega(n)}&=&G(z)\,x(\log x)^{z-1}+O\left(x(\log x)^{\Re z-2}\right),\label{Selberg-2}
\eey
and, under the assumption that $|z|<2$,
\bey
\sum_{n\leq x}z^{\Omega(n)}&=&H(z)\,x(\log x)^{z-1}+O\left(x(\log x)^{\Re z-2}\right),\label{Selberg-3}
\eey
where
\bey
F(z)&=&\frac{1}{\Gamma(z)}\prod_p\left(1+\frac{z}{p-1}\right)\left(1-\frac{1}{p}\right)^z,\nonumber\\
G(z)&=&\frac{1}{\Gamma(z)}\prod_p\left(1+\frac{z}{p}\right)\left(1-\frac{1}{p}\right)^z,\nonumber\\
H(z)&=&\frac{1}{\Gamma(z)}\prod_p\left(1-\frac{z}{p}\right)^{-1}\left(1-\frac{1}{p}\right)^z.\label{H}
\eey
The convergence in (\ref{Selberg-1})-(\ref{Selberg-3}) with respect to $z$ is uniform on compact sets.
Notice that (\ref{Selberg-1}) yields the first two terms in (\ref{average-order-2^omega-Mertens}), and that the condition $|z|<2$ in (\ref{Selberg-3}) can not be relaxed since, for example, for $z=2$, (\ref{Selberg-3}) and (\ref{average-order-2^Omega-Grosswald}) are different. These results were further improved and generalized by Delange \cite{MR0289432}. He proved the following
\begin{theorem}\label{thm-Delange}
Let $f$ be a non-negative, integer-valued, additive function such that $f(p)=1$, and let $\chi$ be a bounded, multiplicative function such that $\chi(p)=1$. For $\varrho\geq0$ let 
\be \sigma_0(\varrho)=\inf\left\{\sigma>\frac{1}{2}:\: \sum_{k\geq 2}\sum_{p}\frac{|\chi(p^k)|\varrho^{f(p^k)}}{p^{k\sigma}}<\infty\right\}\nonumber\ee
(let us set $\inf\varnothing=+\infty$). Let $E=\{\varrho\geq0:\: \sigma_0(\varrho)<1\}$ and set $R=\sup E\geq1$. Then there exists a sequence $\left\{A_j(z)\right\}_{j\geq 0}$ of holomorphic functions on $|z|< R$ (and continuous on $|z|\leq R$ if $R\in E$) such that for every $q\geq 0$
\be\sum_{n\leq x}\chi(n)z^{f(n)}=x(\log x)^{z-1}\left(\sum_{j=0}^q\frac{A_j(z)}{(\log x)^j}+O\left(\frac{1}{(\log x)^{q+1}}\right)\right)\hspace{.5cm}\mbox{as $x\to\infty$}\label{Delange}.\ee
The constant implied by the $O$-notation is uniform in $z$ on compact sets.
\end{theorem}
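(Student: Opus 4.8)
I would prove this by the Landau--Selberg--Delange method: encode the sum in a Dirichlet series, isolate and describe its singularity at $s=1$, and recover the full asymptotic expansion by a Hankel--contour integration. Since $f$ is additive and integer--valued, $n\mapsto z^{f(n)}$ is multiplicative, and as $\chi$ is multiplicative so is $n\mapsto\chi(n)z^{f(n)}$; hence for $\Re s>1$ and $|z|$ in a suitable disc
\be D(s,z):=\sum_{n\ge1}\frac{\chi(n)z^{f(n)}}{n^{s}}=\prod_p\Bigl(1+\sum_{k\ge1}\frac{\chi(p^k)z^{f(p^k)}}{p^{ks}}\Bigr).\nonumber\ee
Because $f(p)=\chi(p)=1$, the $p$-th Euler factor is $1+zp^{-s}+O(p^{-2s})$, matching to first order the $p$-th factor of $\zeta(s)^{z}=\prod_p(1-p^{-s})^{-z}$. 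I would therefore factor $D(s,z)=\zeta(s)^{z}\,G(s,z)$ with $G(s,z)=\prod_p(1-p^{-s})^{z}\bigl(1+\sum_{k\ge1}\chi(p^k)z^{f(p^k)}p^{-ks}\bigr)$, and check that the convergence of this product is governed \emph{precisely} by the defining inequality of $\sigma_0(\varrho)$: for $|z|\le\varrho<R$ it converges absolutely and locally uniformly on $\{\Re s>\sigma_0(\varrho)\}$, so $G$ is holomorphic in $s$ there and holomorphic in $z$ on $|z|<R$; if $R\in E$ then $\sigma_0(R)<1$ and $G$ extends continuously to $|z|\le R$. Since $\sigma_0(\varrho)<1$, this exhibits, for each such $z$, a punctured neighbourhood of $s=1$ in which $D(s,z)=\zeta(s)^{z}G(s,z)$, the only singularity being the (generally non-polar) branch point of $\zeta(s)^{z}$ at $s=1$.

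\textbf{Perron and contour shift.} Starting from the truncated Perron formula
\be\sum_{n\le x}\chi(n)z^{f(n)}=\frac1{2\pi i}\int_{c-iT}^{c+iT}D(s,z)\frac{x^{s}}{s}\,\de s+(\text{error}),\qquad c=1+\frac1{\log x},\nonumber\ee
I would push the vertical segment onto a truncated Hankel--type contour hugging the cut $(1-\delta_0,1]$ of $\zeta(s)^{z}$, joined to the lines $\Re s=c$ by arcs, keeping the whole contour inside the classical zero--free region $\sigma\ge 1-c_1/\log(|t|+2)$, where $\zeta(s)^{z}$ is single--valued and $|\zeta(s)^{z}|$ is bounded by a power of $\log(|t|+2)$. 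Cauchy's theorem applies since $D(\cdot,z)$ is holomorphic between the two contours. Taking $T=\exp(c_2\sqrt{\log x})$, the horizontal, vertical and arc pieces each contribute $\ll x\exp(-c_3\sqrt{\log x})$, which is absorbed by the error term in the statement.

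\textbf{The Hankel integral and the coefficients $A_j$.} On the small loop around $s=1$ I would write $D(s,z)\frac{x^{s}}{s}=(s-1)^{-z}\,\Psi(s,z)\,x\,e^{(s-1)\log x}$ with $\Psi(s,z)=\bigl((s-1)\zeta(s)\bigr)^{z}G(s,z)/s$ holomorphic near $s=1$, expand $\Psi(s,z)=\sum_{j\ge0}c_j(z)(s-1)^{j}$, and integrate term by term: after $w=(s-1)\log x$ the loop becomes a Hankel contour around $0$ and
\be\frac1{2\pi i}\int(s-1)^{-z+j}e^{(s-1)\log x}\,\de s=(\log x)^{z-1-j}\,\frac1{2\pi i}\int w^{-z+j}e^{w}\,\de w=\frac{(\log x)^{z-1-j}}{\Gamma(z-j)}+(\text{tail}).\nonumber\ee
Truncating the Taylor expansion at $j=q$ and the Hankel contour at a suitable height yields
\be\sum_{n\le x}\chi(n)z^{f(n)}=x(\log x)^{z-1}\Bigl(\sum_{j=0}^{q}\frac{A_j(z)}{(\log x)^{j}}+O\bigl((\log x)^{-q-1}\bigr)\Bigr),\qquad A_j(z):=\frac{c_j(z)}{\Gamma(z-j)}.\nonumber\ee
Holomorphy of $A_j$ on $|z|<R$ (and continuity on $|z|\le R$ when $R\in E$) follows from Step 1 together with the fact that $z\mapsto\bigl((s-1)\zeta(s)\bigr)^{z}$ and $1/\Gamma$ are entire; Cauchy's estimates for the $c_j(z)$ also give the uniformity of the implied constants in $z$ on compact sets.

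\textbf{Main obstacle.} The technical heart is the uniform--in--$z$ control of the error: because $\zeta(s)^{z}$ carries a genuine branch singularity (not merely a pole, as in Selberg's special cases), one must set up the Hankel contour and the bounds for $\zeta(s)^{z}$ in the zero--free region with care, and verify that all estimates stay uniform as $|z|\uparrow R$ --- which is exactly where the hypothesis $\sigma_0(R)<1$ for $R\in E$ enters. A subsidiary bookkeeping point is balancing the tail of the Taylor expansion of $\Psi$ against the truncation of the Hankel contour so as to land on the stated error $O((\log x)^{-q-1})$ rather than a weaker power.
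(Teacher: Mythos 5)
The paper does not prove this statement: it is quoted verbatim as Delange's theorem (reference \cite{MR0289432}) and used as background, so there is no in-paper proof to compare against. Your proposal is a correct outline of the Selberg--Delange method, which is essentially the argument in the cited source (and in Tenenbaum's book, Chapter II.5): factor $D(s,z)=\zeta(s)^{z}G(s,z)$, observe that the hypothesis $f(p)=\chi(p)=1$ cancels the first-order term so that convergence of $G$ is controlled exactly by the series defining $\sigma_0(\varrho)$, then apply Perron plus a truncated Hankel contour in the zero-free region and read off $A_j(z)=c_j(z)/\Gamma(z-j)$. Your formula for $A_0$ agrees with the expression the paper records after the theorem, which is a good consistency check. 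Two small points worth tightening if you were to write this out in full: (i) near $t=0$ the Hankel loop must stay to the right of $\sigma_0(\varrho)$ (not merely inside the classical zero-free region, whose left edge at bounded height could in principle cross $\sigma_0(\varrho)$), so the loop radius $\delta_0$ must be taken smaller than $1-\sigma_0(\varrho)$, uniformly for $|z|\leq\varrho$; and (ii) the truncated Perron error requires a crude bound on $\sum_{n\leq x}|\chi(n)z^{f(n)}|$, which follows from boundedness of $\chi$ and $|z|^{f(n)}\leq|z|^{\Omega(n)}$ together with a divisor-type estimate, and should be stated to justify the first display. Neither is a gap in the idea, only in the bookkeeping.
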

In other words, one can write an asymptotic expansion for the sum in (\ref{Delange}) in powers of $\log x$ to arbitrary order. 
The functions $A_j(z)$ in (\ref{Delange}) can be constructed explicitly, in particular 
\be A_0(z)=\frac{1}{\Gamma(z)}\prod_{p}\left(1+\sum_{k=1}^{\infty}\frac{\chi(p^k)z^{f(p^k)}}{p^k}\right)\left(1-\frac{1}{p}
\right)^z\label{A_0(z)}.\nonumber\ee 
Notice that the results by Selberg follow from Theorem \ref{thm-Delange}, namely  $(f,\chi)=(\omega,1)$ gives (\ref{Selberg-1}), $(f,\chi)=(\omega,\mu^2)$ gives (\ref{Selberg-2}), and $(f,\chi)=(\Omega,1)$ gives (\ref{Selberg-3}).
Theorem \ref{thm-Delange} implies a general result concerning the average order of $\chi$ along the level sets of $f$:
\begin{theorem}\label{thm-Delange-appl}
Let $f,\chi$ be as in Theorem \ref{thm-Delange}. For every $m\geq 1$, there exists a sequence $\{P_j\}_{j\geq0}$ of polynomials of degree $\leq m-1$ such that for every $q\geq 0$
\be\sum_{\scriptsize{\ba{c}n\leq x\\ f(n)=m\ea}}\chi(n)=\sum_{j=1}^{q}\frac{x P_j(\log\log x)}{(\log x)^{j+1}}+O\left(\frac{x(\log\log x)^{m-1}}{(\log x)^{q+2}}\right)\hspace{.5cm}\mbox{as $x\to\infty$}.\nonumber\ee 
The coefficient of the monomial of degree $m-1$ in $P_j$ is 
\be\frac{(-1)^j}{(m-1)!}\left.\frac{\de^j}{\de s^j}\left(\frac{1}{s}\prod_p\left(1+\sum_{f(p^k)=0}\frac{\chi(p^k)}{p^{ks}}\right)\right)\right |_{s=1}.\nonumber\ee
\end{theorem}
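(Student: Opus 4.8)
The plan is to extract $\sum_{n\le x,\,f(n)=m}\chi(n)$ as the coefficient of $z^m$ in the polynomial $\sum_{n\le x}\chi(n)z^{f(n)}$ and then substitute into that polynomial the asymptotic expansion furnished by Theorem~\ref{thm-Delange}. First I would note that, for fixed $x$, $\sum_{n\le x}\chi(n)z^{f(n)}$ is a genuine polynomial in $z$, whence
\[\sum_{\substack{n\le x\\ f(n)=m}}\chi(n)=\frac{1}{m!}\left.\frac{\de^m}{\de z^m}\right|_{z=0}\sum_{n\le x}\chi(n)z^{f(n)}.\]
Into the right-hand side I would feed Theorem~\ref{thm-Delange}, applied with $q+1$ in place of $q$ to keep headroom for the error, using that $z=0$ is an interior point of the common domain $|z|<R$ (since $R=\sup E\ge1$) so that the $m$-fold $z$-differentiation may be carried out term by term.

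For the main terms one writes $(\log x)^{z-1}=(\log x)^{-1}\exp(z\log\log x)$, so that the $j$-th summand of Delange's expansion contributes
\[\frac{1}{m!}\left.\frac{\de^m}{\de z^m}\right|_{z=0}\!\left(\frac{x}{(\log x)^{j+1}}\,e^{z\log\log x}\,A_j(z)\right)=\frac{x}{(\log x)^{j+1}}\,P_j(\log\log x),\]
where Leibniz's rule gives $P_j(t)=\sum_{\ell=0}^{m}\frac{A_j^{(m-\ell)}(0)}{\ell!\,(m-\ell)!}\,t^{\ell}$, a polynomial in $t$ of degree $\le m$ with top coefficient $A_j(0)/m!$ and next coefficient $A_j'(0)/(m-1)!$. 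The degree drops to $\le m-1$ because $A_j(0)=0$ for every $j$: evaluating Delange's formula at $z=0$ yields $\sum_{n\le x,\,f(n)=0}\chi(n)=x\sum_j A_j(0)(\log x)^{-j-1}+O(\cdots)$, while $f(p)=1$ forces every $n$ with $f(n)=0$ to be squarefull (i.e. $p\mid n\Rightarrow p^2\mid n$), so the left-hand side is $O(\sqrt x)$ and the uniqueness of asymptotic expansions gives $A_j(0)=0$.

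To pin down the leading coefficient of $P_j$ I would re-enter the Selberg--Delange construction underlying Theorem~\ref{thm-Delange}. Writing $\mathcal D(s,z):=\sum_n\chi(n)z^{f(n)}n^{-s}=\zeta(s)^z G(s,z)$ and $H(s,z):=s^{-1}(s-1)^z\mathcal D(s,z)$, holomorphic in $s$ near $1$ for $|z|<R$, the Hankel-contour evaluation of $\frac{1}{2\pi\I}\int\mathcal D(s,z)\,x^s\,s^{-1}\de s$ (with the substitution $x^s=x\,e^{w}$ near $s=1$) gives $A_j(z)=\frac{1}{j!\,\Gamma(z-j)}\big[\partial_s^j H(s,z)\big]_{s=1}$, which for $j=0$ reproduces the formula for $A_0(z)$ quoted after Theorem~\ref{thm-Delange}. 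Since $\Gamma$ has a simple pole at $-j$ with residue $(-1)^j/j!$, one has $\partial_z\big[1/\Gamma(z-j)\big]_{z=0}=(-1)^j j!$, hence $A_j'(0)=(-1)^j\big[\partial_s^j H(s,0)\big]_{s=1}$ (the other Leibniz term being killed by $1/\Gamma(-j)=0$); and $H(s,0)=s^{-1}\mathcal D(s,0)=s^{-1}\prod_p\big(1+\sum_{f(p^k)=0}\chi(p^k)p^{-ks}\big)$. Dividing $A_j'(0)$ by $(m-1)!$ gives exactly the coefficient in the statement.

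Finally I would control the tail. The $j=q+1$ main term equals $\frac{x}{(\log x)^{q+2}}P_{q+1}(\log\log x)=O\!\big(x(\log\log x)^{m-1}(\log x)^{-q-2}\big)$ by the degree bound, and is absorbed into the error. Writing $\mathcal E(x,z)$ for the remainder in Theorem~\ref{thm-Delange} applied with $q+1$ terms — holomorphic in $z$ on a fixed disc around $0$ and satisfying $|\mathcal E(x,z)|\ll x(\log x)^{\Re z-q-3}$ there uniformly — Cauchy's estimate gives $\frac{1}{m!}\big|\partial_z^m\mathcal E(x,0)\big|\le r^{-m}\max_{|z|=r}|\mathcal E(x,z)|$, and with the \emph{shrinking} radius $r=1/\log\log x$ (admissible once $x$ is large) this is $\ll (\log\log x)^m\cdot x(\log x)^{-q-3}$ since $(\log x)^r=e$ on $|z|=r$; this is $o\!\big(x(\log\log x)^{m-1}(\log x)^{-q-2}\big)$. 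Collecting the terms $j=0,\dots,q$ together with these two contributions yields the asserted expansion. I expect the real obstacle to be the third step: the closed form of the subleading coefficient cannot be obtained from Theorem~\ref{thm-Delange} as a black box and requires the internal structure of Delange's $A_j$; by contrast, the error bookkeeping — a competition between the shrinking Cauchy radius and the powers of $\log x$ — is delicate but routine once that structure is in hand.
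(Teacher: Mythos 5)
The paper does not actually prove this statement: it is quoted from Delange \cite{MR0289432} as a consequence of Theorem \ref{thm-Delange}, so there is no in-paper proof to compare against. Your derivation is the standard Selberg--Delange coefficient-extraction argument, and as far as I can check it is correct. The three substantive points all hold up: (i) your elementary proof that $A_j(0)=0$ (any $n$ with $f(n)=0$ is squarefull because $f(p)=1$, so the left side at $z=0$ is $O(\sqrt x)$, and uniqueness of asymptotic expansions finishes it) is valid and avoids invoking the pole of $\Gamma$ at $0$; (ii) the formula $A_j(z)=\tfrac{1}{j!\,\Gamma(z-j)}\bigl[\partial_s^jH(s,z)\bigr]_{s=1}$ is indeed the form the coefficients take in the Selberg--Delange method --- it reproduces the paper's $A_0(z)$ at $j=0$ --- and your computation $A_j'(0)=(-1)^j\bigl[\partial_s^jH(s,0)\bigr]_{s=1}$ with $H(s,0)=s^{-1}\prod_p\bigl(1+\sum_{f(p^k)=0}\chi(p^k)p^{-ks}\bigr)$ yields exactly the stated coefficient after division by $(m-1)!$, and correctly recovers Landau's constant $1/(m-1)!$ in the three special cases listed after the theorem; (iii) the error bookkeeping via Cauchy's inequality on $|z|=1/\log\log x$ is sound (a fixed radius $r<1$ would also do, since one only needs $o\bigl(x(\log x)^{-q-2}\bigr)$). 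You are right that step (ii) is the one place where Theorem \ref{thm-Delange} cannot be used as a black box and the internal structure of the $A_j$ must be opened up; you supply it correctly. One final remark: your argument produces the main sum starting at $j=0$ rather than $j=1$ as printed; since the sequence is indexed $\{P_j\}_{j\geq0}$ and the case $q=0$ must recover Landau's asymptotic $\sim xP_0(\log\log x)/\log x$, the printed lower limit is a typo and your version is the correct one.
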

In particular Theorem \ref{thm-Delange-appl} implies the results by Landau \cite{Landau-1911}: as $x\to\infty$
\bey
\sum_{\scriptsize{\ba{c}n\leq x\\ \omega(n)=m\ea}}1\sim \sum_{\scriptsize{\ba{c}n\leq x\\ \Omega(n)=m\ea}}1\sim\sum_{\scriptsize{\ba{c}n\leq x\\ \omega(n)=\Omega(n)=m\ea}}1\sim\frac{x}{\log x} \frac{(\log\log x)^{m-1}}{(m-1)!}\nonumber
\eey
by taking $(f,\chi)=(\omega,1)$, $(f,\chi)=(\Omega,1)$, and $(f,\chi)=(\omega,\mu^2)$ respectively. 
Let us also point out that the error terms coming from Theorem \ref{thm-Delange-appl} (i.e. $O\!\left(\frac{x(\log\log x)^{m-1}}{(\log x)^2}\right)$) are better than the ones given by Landau (i.e. $O\!\left(\frac{x(\log\log x)^{m-2}}{\log x}\right)$). Recall that the integers for which $\Omega(n)=m$ are called \emph{$m$-almost primes}.
\subsubsection{Ordinary and Logarithmic Mean Values}
Given an arithmetic function $g$, one can define the \emph{(ordinary) mean value of $g$} as
$M[g]=\lim_{x\to\infty}\frac{1}{x}\sum_{n\leq x}g(n),$
and the \emph{logarithmic mean value of $g$} as
$L[g]=\lim_{x\to\infty}\frac{1}{\log x}\sum_{n\leq x}\frac{g(n)}{n}.$
It is a classical fact that, if $M[g]$ exists, then $L[g]$ also exists and they are equal. The other implication is in general not true. For example, $M[\mu]=L[\mu]=0$ and $M[\mu^2]=L[\mu^2]=\tfrac{6}{\pi^2}$. 

The sums (\ref{two-main-sums-1}) we are concerned with can be seen as partial sums for some weighted logarithmic averages.

\subsection{Sums over Smooth Integers}\label{section-sums-over-smooth-integers}
Sums of type (\ref{Selberg-1}-\ref{Delange}) can be further generalized by setting a constraint on the size of primes in the factorization of $n$. Integers whose prime factors are all $\leq y$ are called \emph{$y$-smooth}, while those whose prime factors are $\geq c$ are called \emph{$c$-rough} or \emph{$c$-jagged}.
Generalizing (\ref{Selberg-3}), we can introduce the sum

\be \Psi(z;x,y,c)=\sum_{\scriptsize{\ba{c} n \leq x\\ p|n\Rightarrow c<p\leq y\ea}}z^{\Omega(n)}.\label{general-Psi}\ee
Clearly, if $c<2$ we set no restriction on the roughness of the integers in the sum, and if $y\geq x$ we set no restriction for their smoothness. The inequality $|z|<2$ in (\ref{Selberg-3}) can be replaced by $|z|<c$ for $c\geq 2$, and (using the notation (\ref{general-Psi}) we just introduced) we have
\be \Psi(z;x,x,c)=H_c(z)\, x(\log x)^{z-1}+O(x(\log x)^{\Re z-2})\hspace{.5cm}\mbox{as $x\to\infty$},\label{Psi(z,x,x,c)}\ee
where \be H_c(z)=\frac{1}{\Gamma(z)}\prod_{p\leq c}\left(1-\frac{1}{p}\right)^z\prod_{p>c}\left(1-\frac{z}{p}\right)^{-1}\left(1-\frac{1}{p}\right)^z,\hspace{.5cm}\mbox{cfr. (\ref{H})}.\label{Hc}\ee
Moreover, the results by Delange (e.g. Theorem \ref{thm-Delange}) allow to write the asymptotic of $\Psi(z;x,x,c)$ in powers of $\log x$, where the functions $z\mapsto A_j(z)$ (see (\ref{Delange})) are holomorphic for $|z|<p'$, where $p'=p'(c)$ is the least prime larger than $c$.

For $z=1$ we have the well-known counting of $y$-smooth integers:

\be \Psi(x,y)=\Psi(1;x,y,1)=|\{n\leq x:\: \mbox{$n$ is $y$-smooth}\}|\label{Psi-smooth}.\ee

Dickman \cite{Dickman-1930} proved that $\Psi(x,y)\sim x\rho(u)$ as $x\to\infty$ when for $y=x^{1/u}$ for some $u\geq1$, where $\rho$ is the Dickman-de Bruijn function, i.e. the solution of the delay differential equation $u\rho'(u)+\rho(u-1)=0$ with initial condition $\rho(u)=1$ for $0\leq u\leq 1$. The range of $u$'s for which the above asymptotic result is valid has been significantly enlarged, and explicit error terms are known. Namely
\be \Psi(x,y)=x\rho(u)\left(1+O\!\left(\frac{\log(u+1)}{\log y}\right)\right),\hspace{.5cm}\mbox{where $y=x^{1/u}$},\label{Psi-asymptotic}\ee 
and $y>\exp((\log x)^{5/8+\varepsilon})$ (de Bruijn \cite{deBruijn-1951a, deBruijn-1951b, deBruijn-1966}), or $y>\exp((\log\log x)^{5/3+\varepsilon})$ (Hildebrand \cite{Hildebrand-1984a}). Moreover, (\ref{Psi-asymptotic}) holds uniformly for $y\geq (\log x)^{2+\varepsilon}$ if and only if the Riemann Hypothesis is true (Hildebrand \cite{Hildebrand-1984a}).
For smaller values of $y$ the asymptotic result (\ref{Psi-asymptotic}) is not true anymore. One has, for example,
\be \Psi(x,\log^A x)=x^{1-1/A+O(1/\log\log x)}\hspace{.5cm}\mbox{for $A>1$}\nonumber\ee
(Granville \cite{Granville2008}, p. 291) and
\be \log\Psi(x,\kappa\log x)=\left(\log(1+\kappa)+\kappa\log(1+1/\kappa)\right)\frac{\log x}{\log\log x}\left(1+O\left(\frac{1}{\log\log x}\right)\right)\nonumber\ee 
(Granville \cite{Granville2008}, Erd\H{o}s \cite{Erdos-1963}).

One can further restrict the sums (\ref{general-Psi}-\ref{Psi-smooth}) to square-free integers (for which $\omega(n)=\Omega(n)$) (cfr. (\ref{Selberg-2})). If $y$ is not too small compared to $x$ (namely $\log y/\log\log x\to\infty $), then the rhs of (\ref{Psi-smooth}) is simply multiplied by the density of square-free numbers, i.e. $6/\pi^2$. 
In the case we shall consider this will not be the case, since we will deal with $y\sim \log n$.

It is also worthwhile to mention that Alladi \cite{MR913766}, Hensley \cite{MR1018380}, and Hildebrand \cite{MR871170,  MR921088} proved an analog of the Erd\"os-Kac result (\ref{Erdos-Kac}) for $y$-smooth integers, with the same mean and variance ($\sim \log\log x$) for $u=\log x/\log y=o(\log\log x)$, while the mean is $\sim u$ and the variance is $\sim u/(\log u)^2$ whenever $\log x\ll y\ll \exp((\log x)^{1/21})$.

A large amount of work on averages of multiplicative functions over smooth integers has been done by G. Tenenbaum and J. Wu \cite{Tenenbaum-Wu-I, Tenenbaum-Wu-III, Tenenbaum-Wu-IV} and G. Hanrot, Tenenbaum and Wu \cite{Hanrot-Tenenbaum-Wu-II}. The reader can refer to  \cite{Tenenbaum-Wu-I} for an historical introduction to the subject.

\subsection{Convolutions of the Dickman-de Bruijn Distribution}

The case of general $\Psi(z,x,y,c)$ has been studied by DeKoninck and Hensley \cite{MR559011}. They proved an approximation of $\Psi$ by means of another function $\psi$ which, in turn, is close to $x(\log x)^{z-1}\rho_z(u)$, where $\rho_z$ is a close relative of the Dickman-de Bruijn function. 

Let us consider $\alpha\geq 1$ and let us use the notation $\Psi_\alpha(x,y)=\Psi(\alpha;x,y,\alpha)$. The asymptotic of $\Psi_\alpha(x,x)$  follows from Theorem \ref{thm-Delange}, see (\ref{Psi(z,x,x,c)}-\ref{Hc}). The first result for $y$-smooth numbers is due to Hensley \cite{MR741951}, who gave the asymptotic of $\Psi_\alpha(x,x^{1/u})$, similarly to (\ref{Psi-asymptotic}).
He proved that for every $\alpha\geq 1$ and every $0<\varepsilon<1$,
\be \Psi_\alpha(x,x^{1/u})=\rho_{\alpha}(u)\,x(\log x)^{\alpha-1}(1+o(1)),\nonumber\ee 
uniformly in $1\leq u\leq (1-\varepsilon)\log\log x/\log\log\log x$ as $x\to\infty$,
where $\rho_\alpha(u)$ satisfies the delay differential equation involving the function $A_0$ (see (\ref{A_0(z)})) from Theorem \ref{thm-Delange}.
\be
\begin{cases}
\rho_\alpha(u)=0&
u<0
\\
\rho_\alpha(u)=A_0(\alpha) & 0\leq u\leq 1\\
-u^{\alpha}\rho_\alpha'(u)=\alpha(u-1)^{\alpha-1}\rho_\alpha(u-1)&u>1.
\end{cases}\label{rho-alpha}
\ee
For $\alpha=1$ the function (\ref{rho-alpha}) agrees with the Dickman-de Bruijn function (see Section \ref{section-sums-over-smooth-integers}). It is convenient to introduce a probability distribution  on $\R_{\geq0}$, whose density $w_\alpha(u)$ satisfies,
\be
w_\alpha(u)=\frac{e^{\alpha \gamma}}{\Gamma(\alpha)}u^{\alpha-1}\rho_{\alpha}(u).\nonumber 
\ee
Notice that for $\alpha=1$ the two functions $\rho(u)$ and $w_1(u)$ differ by a multiplicative constant, but for general $\alpha$ this is not the case. The density $w_\alpha(u)$ decays faster than exponentially 
as $u\to\infty$ (see \cite{deBruijn-1951a}). 
It is known that the characteristic function $\varphi^{(\alpha)}$ of $w_\alpha$ is given by
\be\varphi^{(\alpha)}(\lambda)=\int_{0}^\infty e^{i\lambda u}w_\alpha(u)\de u=\exp\left(\alpha\int_0^1 \frac{e^{i\lambda v}-1}{v}\de v\right)\label{varphi-alpha}.\ee
In other words, since $\varphi^{(\alpha)}(\lambda)=(\varphi^{(1)}(\lambda))^\alpha$, $w_\alpha$ is the density of the \emph{$\alpha$-convolution of the Dickman-de Bruijn distribution}.
Later we shall use the fact that 
\be\varphi^{(1)}(\lambda)\sim\frac{ie^{-\gamma}}{\lambda}\hspace{.3cm}\mbox{as $|\lambda|\to\infty$}\label{label-asymptotic-decay-varphi(lambda)},
\ee
where $\gamma$ is Euler-Mascheroni's constant.
In our analysis, we shall consider $\alpha\in\C$. In this case $\alpha$-convolutions of the Dickman-de Bruijn distribution cannot be considered as probability distributions on $\R_{\geq0}$, but only as \emph{distributions in the sense of Schwartz}. More precisely, they will be elements of $(\mathcal S_\eta(\R))^*$ for $\eta>|\alpha|-\Re\alpha+1$.

\section{Reformulaton of the Problem. The Ensemble $\frak X_N^{(k)}$}\label{section-the-ensemble}
Recall the prime counting function $\pi(N)=|\{p\in\mathcal P:\: p\leq N\}|$. The classical Prime Number Theorem gives $\pi(N)\sim\frac{N}{\log N}$ as $N\to\infty$ 
Let us consider the set 
\be\frak X_N^{(k)}=\left\{x=\prod_{p\leq N} p^{\nu(p)}:\: 0\leq \nu(p)\leq k-1 
\right\}\nonumber\ee 
consisting of all $k$-free integers whose prime factors do not exceed $N$.
Notice that $|\frak X_N^{(k)}|=k^{\pi(N)}$ and $\max\frak X_N^{(k)}=\prod_{p\leq N} p^{k-1}=
e^{(k-1)\pi(N)\log\pi(N)(1+o(1))}$. This means that $\frak X_N^{(k)}$ is a sparse set. For $x\in\frak X_N^{(k)}$ we have $\Omega(x)=\sum_{p\leq N} \nu(p) 
$ and $\omega(x)=\left|\{p\leq N:\: \nu(p)>0\}\right|$. By definition, all $x\in\frak X_N^{(k)}$ are $k$-free, and it easy to check that all $k$-free $x\leq p_{\pi(N)}$ belong to $\frak X_N^{(k)}$.

Let us introduce a 
complex measure $P_\Omega^{(\alpha)}$ 
on $\frak X_N^{(k)}$: for every $X\subseteq\frak X_N^{(k)}$, let 
\be P_\Omega^{(\alpha)}(X)=\sum_{x\in X}\frac{\alpha^{\Omega(x)}}{x}\nonumber
.\ee
For example $\frak X_5^{(2)}=\{1,2,3,5,6,10,15,30\}$, and $P^{(\alpha)}_\Omega(\{1,3,10\})=
\frac{\alpha^0}{1}+\frac{\alpha^1}{3}+\frac{\alpha^2}{10}
$. Another example is $\frak X_{4}^{(3)}=\{1,2,3,4,6,9,18,36\}$, where $P_{\Omega}^{(\alpha)}(\{1,9,18\})=\frac{\alpha^0}{1}+\frac{\alpha^2}{9}+\frac{\alpha^3}{18}
$. 

Using the terminology of Statistical Mechanics, we introduce the \emph{partition function} 
\be Z_{\Omega,N}^{(k,\alpha)}=P_\Omega^{(\alpha)}(\frak X_N^{(k)}).
\nonumber
\ee

We prove an asymptotic result for $Z_{\Omega,N}^{(k,\alpha)}$ as $N\to\infty$. In the case of $\Re \alpha >0$ we have that 
$|Z_N^{(k,\alpha)}|\to\infty$ as $N\to\infty$ and the analogy with Statistical Mechanics suggest the term ``\emph{thermodynamical limit}". 
Let us define a set 
for the parameter $\alpha$ for which the partition function 
vanishes for sufficiently large $N$. This set is responsible for the restriction $|\alpha|<2$ in our Theorem \ref{thm-1}. 
Let
\bey 
\mathcal A_\Omega^{(k)}&=&\left\{z\in\C:\: z=p e^{2\pi i l/k},\:p\in\mathcal P,\:1\leq l\leq k-1\right\}.\nonumber \eey

Notice that $-\mathcal P\subseteq \mathcal A_\Omega^{(k)}$ if $k$ is even and $\mathcal A_\Omega^{(2)}=-\mathcal P$. 
We have the 
following
\begin{lem}\label{lem-asymptotic-ZNkq}
There exist a 
constants $C_\Omega=C_\Omega(k,\alpha)\in\C$ 
such that, as $N\to\infty$,
\bey 
Z_{\Omega,N}^{(k,\alpha)} &=&\begin{cases}0,&\alpha\in\mathcal A_\Omega^{(k)};\\
C_\Omega\, (\log N)^{\alpha}\left(1+ O\!\left(\frac{1}{\log N}
\right)\right),&\mbox{otherwise};
\end{cases}
\label{statement-lem-asymptotic-ZNkq-1}
\eey
where the constant implied by the $O$-notation depends on 
$k$ and $\alpha$.
\end{lem}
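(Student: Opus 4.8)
The starting point is that $\frak X_N^{(k)}$ has a product structure, which turns $Z_{\Omega,N}^{(k,\alpha)}$ into an Euler product that can be evaluated in closed form. Each $x\in\frak X_N^{(k)}$ is uniquely $x=\prod_{p\le N}p^{\nu(p)}$ with $0\le\nu(p)\le k-1$, and $\Omega(x)=\sum_{p\le N}\nu(p)$, so the summand is $\alpha^{\Omega(x)}/x=\prod_{p\le N}(\alpha/p)^{\nu(p)}$; summing over all admissible exponent vectors gives
\[
Z_{\Omega,N}^{(k,\alpha)}=\prod_{p\le N}\Bigl(\,\sum_{\nu=0}^{k-1}(\alpha/p)^{\nu}\Bigr)=\prod_{p\le N}\frac{1-(\alpha/p)^{k}}{1-\alpha/p},
\]
where the $p$-th factor is understood to be $k$ when $\alpha=p$.

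I would first settle the vanishing case. The displayed product is zero exactly when some factor $1-(\alpha/p)^{k}$ vanishes while $\alpha\ne p$, i.e. when $\alpha/p$ is a $k$-th root of unity different from $1$; equivalently $\alpha=p\,e^{2\pi i l/k}$ for some prime $p\le N$ and $1\le l\le k-1$. Hence, if $\alpha\in\mathcal A_\Omega^{(k)}$, writing $\alpha=p_0\,e^{2\pi i l_0/k}$, the $p_0$-th factor vanishes for every $N\ge p_0$, so $Z_{\Omega,N}^{(k,\alpha)}=0$ for all large $N$; and if $\alpha\notin\mathcal A_\Omega^{(k)}$ then no factor vanishes and $Z_{\Omega,N}^{(k,\alpha)}\ne0$ for all $N$.

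For $\alpha\notin\mathcal A_\Omega^{(k)}$ I would then factor out the divergent part. Write
\[
Z_{\Omega,N}^{(k,\alpha)}=\Bigl(\prod_{p\le N}\bigl(1-\tfrac1p\bigr)^{-\alpha}\Bigr)\prod_{p\le N}g_p(\alpha),
\qquad
g_p(\alpha):=\frac{\bigl(1-(\alpha/p)^{k}\bigr)\bigl(1-1/p\bigr)^{\alpha}}{1-\alpha/p},
\]
using the principal branch throughout (legitimate since all bases are positive reals and $z\mapsto z^{-\alpha}$ is multiplicative on $\R_{>0}$), with the factor at $p=\alpha$, should $\alpha$ be prime, read as $k\,(1-1/p)^{\alpha}$. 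A short Taylor expansion shows $\log g_p(\alpha)=O(p^{-2})$ as $p\to\infty$: the $p^{-1}$ contributions of $(1-1/p)^{\alpha}$ and $(1-\alpha/p)^{-1}$ cancel, and $1-(\alpha/p)^{k}=1+O(p^{-2})$ because $k\ge2$. Since in addition no $g_p(\alpha)$ vanishes and only finitely many primes satisfy $|\alpha/p|\ge1$, the product $C_0(k,\alpha):=\prod_p g_p(\alpha)$ converges absolutely to a nonzero limit, and $\prod_{p\le N}g_p(\alpha)=C_0(k,\alpha)\bigl(1+O(1/\log N)\bigr)$ (indeed with error $O(1/N)$). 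On the other hand, Mertens' third theorem $\prod_{p\le N}(1-1/p)=\frac{e^{-\gamma}}{\log N}\bigl(1+O(1/\log N)\bigr)$ yields $\prod_{p\le N}(1-1/p)^{-\alpha}=e^{\gamma\alpha}(\log N)^{\alpha}\bigl(1+O(1/\log N)\bigr)$. Multiplying the two, $Z_{\Omega,N}^{(k,\alpha)}=C_\Omega(\log N)^{\alpha}\bigl(1+O(1/\log N)\bigr)$ with $C_\Omega:=e^{\gamma\alpha}C_0(k,\alpha)\ne0$, and the implied constant depends only on $k$ and $\alpha$.

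No step here is a genuine obstacle; the only delicate points are the bookkeeping of the principal branch for the complex powers — harmless as noted — and the degenerate factor arising when $\alpha$ is a prime, resolved by the convention $\sum_{\nu=0}^{k-1}1=k$. The single external ingredient is the quantitative Mertens theorem, which is precisely what produces the $O(1/\log N)$ error term in the statement; with a weaker form of Mertens one would still obtain the leading asymptotics $C_\Omega(\log N)^{\alpha}(1+o(1))$.
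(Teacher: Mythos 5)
Your proof is correct and follows essentially the same route as the paper: expand $Z_{\Omega,N}^{(k,\alpha)}$ as the Euler product $\prod_{p\le N}\sum_{\nu=0}^{k-1}(\alpha/p)^{\nu}$, identify the vanishing locus with $\mathcal A_\Omega^{(k)}$, split off a convergent correction factor, and extract $(\log N)^{\alpha}$ via Mertens with error $O(1/\log N)$. The only cosmetic difference is that you compare each factor with $(1-1/p)^{-\alpha}$ and invoke Mertens' third theorem in product form, whereas the paper compares with $(1+\alpha/p)$, takes logarithms, and uses the asymptotic for $\sum_{p\le N}1/p$; your version has the mild bonus of yielding the closed form $C_\Omega=e^{\gamma\alpha}\prod_p g_p(\alpha)$.
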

\begin{proof}
We can write
\bey
Z_{\Omega,N}^{(k,\alpha)}&=&\prod_{p\leq N}\!\left(1+\frac{\alpha}{p}+\frac{\alpha^2}{p^2}+\ldots+\frac{\alpha^{k-1}}{p^{k-1}}\right).\label{proof-partition-function-Omega-1}\eey
Notice that $\sum_{l=0}^{k-1}(\alpha/u)^l =0$ if and only if $u=\alpha e^{2 p i\frac{l}{k}} $, $l=1,\ldots, k-1$, and there is a prime $p$ of this form if and only if $\alpha\in\mathcal A_\Omega^{(k)}$. 

Now, let $\alpha\notin \mathcal A_\Omega^{(k)}$, $\alpha\neq 0$. Then there exist a constant $d=d(\alpha)$ such that $z(p)=\sum_{l=0}^{k-1}(\alpha/p)^l\in B_1(1/2)=\{z\in\C:\:|z-1|<1/2\}$  
for every $p>d$. For all such $p$'s we can write $\log z(p)=\log |z(p)|+i\arg(z(p))$ and choose the same branch of the logarithm. From (\ref{proof-partition-function-Omega-1}) we get

\bey
Z_{\Omega,N}^{(k,\alpha)}
&=&C_1\cdot \prod_{d<p\leq N}\left(1+\frac{\alpha}{p}\right)\cdot\prod_{d<p\leq N}\left(1+\frac{\alpha^2/p^2+
\ldots+\alpha^{k-1}/p^{k-1}
}{1+\alpha/p 
}
\right),\label{pf-lm-asymptotic-ZN-1} 
\eey
where $C_1=C_1(\alpha)=\prod_{p\leq d}z(p)\neq0$. 
The second factor in (\ref{pf-lm-asymptotic-ZN-1}) gives the asymptotic $(\log N)^\alpha$. In fact, by taking the logarithm, we have
\bey
\log Z_{\Omega,N}^{(k,\alpha)}&=&\log C_1+
\alpha\sum_{d<p\leq N}\frac{1}{p}-\sum_{d<p\leq N}\left(\log\left(1+\frac{\alpha}{p}\right)-\frac{\alpha}{p}\right)+\sum_{d<p\leq N}\log\left(1+\frac{\alpha^2/p^2-\alpha^k/p^k}{1-\alpha^2/p^2}\right)=\nonumber\\
&=&\log C_1+\alpha(\log\log N+C_2)+C_3+C_4+ O\!\left(\frac{1}{\log N}\right),\nonumber
\eey
where $C_2=C_2(\alpha)$, $C_3=C_3(\alpha)$ and $C_4=C_4(k,\alpha)$ do not depend on $N$.
We then see immediately that (\ref{statement-lem-asymptotic-ZNkq-1}) holds 
with $C_\Omega(k,\alpha)=C_1e^{\alpha C_2+C_3+C_4}$. 
\end{proof}

\begin{remark}
A classical theorem by Mertens \cite{Mertens-1874-ein} gives $C_\Omega(2,1)=e^{-\gamma}$. \end{remark}

Let us observe that the sum 
(\ref{two-main-sums-1}) 
can be written as 
\bey
S_{\Omega,f}(k,\alpha;N)=\sum_{x\in\frak X_N^{(k)}}f\left(\frac{\log x}{\log N}\right) P_{\Omega}^{(\alpha)}(\{x\}).\nonumber
\eey
We can write \be \log x=\sum_{p\leq N} \nu(p) \log p\nonumber\ee
and
introduce the function \be\frak X_N^{(k)}\ni x\mapsto \xi_N(x)=\frac{\log x}{\log N}=\sum_{p\leq N}\nu(p)\frac{\log p}{\log N}.\nonumber
\ee

Our main theorem will follow from the study of the distribution of $\xi_N$ with respect to the measures $P_{\Omega}^{(\alpha)}$. 
It is convenient for us to introduce the \emph{normalized} measure 
$ P_{\Omega,N}^{(k,\alpha)}$ 
on $\frak X_N^{(k)}$, i.e. \be P_{\Omega,N}^{(\alpha,k)}=(Z_{\Omega, N}^{(k,\alpha)})^{-1} P_\Omega^{(\alpha)}.\nonumber\ee

For every $p\leq N$
, the measure 
$P_{\Omega,N}^{(k,\alpha)}$ on $\frak X_N^{(k)}$ induce 
a measure
on $\{0,1,\ldots, k-1\}$  
via the function 
\be \frak X_N^{(k)}\ni x\mapsto\nu(p)=\nu(p,x)=\max\{l\geq 0:\: p^l|x\}. 
\ee 
This means that for each $0\leq t\leq k-1$ we are given the `probability' $P_{\Omega,N}^{(k,\alpha)}(\{x\in\frak X_N^{(k)}\colon \nu(p,x)=t\})$. 
We shall use the distributions of the $\nu(p)$'s to compute the ones of $\xi_N$.
We have the following simple
\begin{lem}\label{lem-distri-nu_j}
For every $0\leq t\leq k-1$
\be P_{\Omega,N}^{(k,\alpha)}(\{\nu(p)=t\})=\frac{\alpha^t p^{k-1-t}(p-\alpha)}{p^k-\alpha^k}. 
\nonumber\ee
\end{lem}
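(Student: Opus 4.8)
The plan is to exploit the product structure of both $\frak X_N^{(k)}$ and of the measure $P_\Omega^{(\alpha)}$. Via the assignment $x\mapsto(\nu(q,x))_{q\le N}$, where the index $q$ runs over primes $\le N$, the set $\frak X_N^{(k)}$ is in bijection with $\prod_{q\le N}\{0,1,\ldots,k-1\}$, and under this identification
\[
P_\Omega^{(\alpha)}(\{x\})=\frac{\alpha^{\Omega(x)}}{x}=\prod_{q\le N}\frac{\alpha^{\nu(q,x)}}{q^{\nu(q,x)}},
\]
so $P_\Omega^{(\alpha)}$ is a product measure. Summing over all $x$ recovers the factorization of $Z_{\Omega,N}^{(k,\alpha)}$ already used in (\ref{proof-partition-function-Omega-1}), and therefore the normalized measure $P_{\Omega,N}^{(k,\alpha)}$ is the product of the one-dimensional complex measures $\mu_q$ on $\{0,1,\ldots,k-1\}$ given by $\mu_q(\{l\})=\frac{(\alpha/q)^l}{\sum_{m=0}^{k-1}(\alpha/q)^m}$. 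Each $\mu_q$ has total mass $1$ and is well defined as long as $\sum_{m=0}^{k-1}(\alpha/q)^m\neq 0$; this holds for all primes $q\le N$ whenever $\alpha\notin\mathcal A_\Omega^{(k)}$, i.e. precisely in the range where $P_{\Omega,N}^{(k,\alpha)}$ itself is defined (cf. Lemma \ref{lem-asymptotic-ZNkq}).

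Next I would observe that the event $\{\nu(p)=t\}$ depends only on the $p$-th coordinate of $x$, so under the product measure its total mass is just the $p$-th marginal evaluated at $t$:
\[
P_{\Omega,N}^{(k,\alpha)}(\{\nu(p)=t\})=\mu_p(\{t\})=\frac{(\alpha/p)^t}{\sum_{m=0}^{k-1}(\alpha/p)^m}.
\]
It then only remains to evaluate the geometric sum in the denominator: $\sum_{m=0}^{k-1}(\alpha/p)^m=\frac{1-(\alpha/p)^k}{1-\alpha/p}=\frac{p^k-\alpha^k}{p^{k-1}(p-\alpha)}$, and substituting this expression gives $\mu_p(\{t\})=\frac{\alpha^t p^{k-1-t}(p-\alpha)}{p^k-\alpha^k}$, which is exactly the asserted identity. (When $p=\alpha$ — a value not excluded by $\alpha\notin\mathcal A_\Omega^{(k)}$ — both sides of the geometric-sum identity degenerate, and the right-hand side of the lemma is to be read as the common limit $\lim_{\alpha\to p}\frac{\alpha^t p^{k-1-t}(p-\alpha)}{p^k-\alpha^k}=\frac{1}{k}$, in agreement with $\mu_p(\{t\})=1/k$.)

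There is no genuinely hard step here; the only point requiring a little care is the bookkeeping for the product structure, namely verifying that marginalizing the normalized complex measure over all primes other than $p$ leaves exactly $\mu_p$ — which is immediate once one notes that every factor $\mu_q$ has total mass $1$ — together with the degenerate case $p=\alpha$ mentioned above.
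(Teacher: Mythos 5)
Your proof is correct and follows essentially the same route as the paper: the paper likewise writes $P_{\Omega,N}^{(k,\alpha)}(\{\nu(p)=t\})$ as $\frac{\alpha^t}{p^t}$ times the product over the remaining primes, cancels against $Z_{\Omega,N}^{(k,\alpha)}$ to leave $\frac{\alpha^t/p^t}{1+\alpha/p+\cdots+\alpha^{k-1}/p^{k-1}}$ (your $\mu_p(\{t\})$), and evaluates the geometric sum. Your explicit product-measure framing and the remark on the degenerate case $\alpha=p$ are fine additions but do not change the argument.
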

\begin{proof}
The result follows from the straightforward computation
\bey
P_{\Omega,N}^{(k,\alpha)}(\{\nu(p)=t\})&=&\frac{1}{Z_{\Omega,N}^{(k,\alpha)}} \frac{\alpha^t}{p^t}\prod_{p'\leq N,\: p'\neq p 
}\!\left(1+\frac{\alpha}{p'}+\ldots+\frac{\alpha^{k-1}}{p'^{k-1}}\right)=\frac{\alpha^t/p^t}{1+\alpha/p+\ldots+\alpha^{k-1}/p^{k-1}}=\nonumber\\
&=&\frac{\alpha^t}{p^t}\frac{1-\alpha/p}{1-\alpha^k/p^k}=\frac{\alpha^t p^{k-1-t}(p-\alpha)}{p^k-\alpha^k},\nonumber
\eey
and the normalization of the measure $P_{\Omega,N}^{(k,\alpha)}$. 
\end{proof}

\begin{remark}
The functions $\nu(p)$ are independent with respect to 
the measure $P_{\Omega,N}^{(k,\alpha)}$ 
, i.e. for every $r\geq 1$, every $p_{j_1}<p_{j_2}<\ldots<p_{j_r}\leq N 
$, and every $(\epsilon_1,\epsilon_2,\ldots,\epsilon_r)\in\{0,1,\ldots,k-1\}^r$, we have
\be P_{\Omega,N}^{(k,\alpha)}\left(\left\{\nu(p_{j_1})=\epsilon_1,\:\nu(p_{j_2})=\epsilon_{2},\ldots,\nu(p_{j_r})=\epsilon_r\right\}\right)=\prod_{l=1}^r P_{\Omega,N}^{(k,\alpha)}(\{\nu(p_{j_l})=\epsilon_l\}),\nonumber\ee
i.e. joint `probabilities' factor completely.
On the other hand, by Lemma \ref{lem-distri-nu_j}, the $\nu(p)$'s are not identically distributed with respect to  
$P_{\Omega,N}^{(k,\alpha)}$. 
Notice, for instance, that $P_{\Omega,N}^{(k,\alpha)}(\{\nu(p_{\pi(N)})=0\})=1-\frac{\alpha}{N}+O\!\left(\frac{1}{N^k}\right)$  as $N\to\infty$.
The dependence of the ``probabilities" $P_{\Omega,N}^{(k,\alpha)}(\{\nu(p)=t\})$ upon $N$ will play a crucial role in the proof of our main theorem.
\end{remark}

Let us define the 
rational functions $F_{\Omega,t}^{(k,\alpha)}\in 
\C(u)$ such that $F_{\Omega,t}^{(k,\alpha)}(p)=P_{\Omega,N}^{(k,\alpha)}(\{\nu(p)=t\})$:
\bey
F_{\Omega,t}^{(k,\alpha)}(u)&=&
\frac{\alpha^t u^{k-1-t}(u-\alpha)}{u^k-\alpha^k}=
\frac{\alpha^t u^{k-1-t}}{u^{k-1}+\alpha u^{k-2}+\alpha^2 u^{k-3}+\ldots+\alpha^{k-2} u+\alpha^{k-1}},
\label{def-F_Omega,t} 
\eey
where $0\leq t\leq k-1$. Notice that the poles of $F_{\Omega, t}^{(k,\alpha)}(u)$ are $- \alpha e^{2\pi i \frac{l}{k}}$, $1\leq l\leq k-1$. 
Let $b_{\Omega,t}^{(k,\alpha)}(l)$, $l\geq0$ the coefficient of the Laurent series for $F_{\Omega,t}^{(k,\alpha)}(u)$ on the neighborhood of infinity 
$|u|> |\alpha| $, 
\be
F_{\Omega,t}^{(k,\alpha)}(u)=\sum_{l=0}^\infty \frac{b_{\Omega,t}^{(k,\alpha)}(l)}{u^{l}}. \nonumber
\ee
Notice that $F_{\Omega,t}^{(k,\alpha)}(u)$ has no positive powers in its expansion at $\infty$ since $\frac{\alpha^tu^{k-1-t}}{u^{k-1}}\sim\frac{\alpha^t}{u^t}$.
Set $b_{\Omega,0}^{(k,\alpha)}(l)=0$ for $l<0$. 
We have the following two simple lemmata:

\begin{lem}\label{structure-Laurent-coeffcients-b_Omega-1}
\be
b_{\Omega,0}^{(k,\alpha)}(l)=\begin{cases}\alpha^l,&l\equiv 0 \mod k;\\ 
-\alpha^l,&l\equiv 1\mod k;\\
0,&l\equiv 2,\ldots,k-1\mod k.\end{cases}
\nonumber\ee
\end{lem}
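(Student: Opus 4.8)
The plan is to extract the Laurent coefficients $b_{\Omega,0}^{(k,\alpha)}(l)$ directly from a closed-form expansion of $F_{\Omega,0}^{(k,\alpha)}(u)$ at infinity. From the definition \eqref{def-F_Omega,t} with $t=0$ we have
\[
F_{\Omega,0}^{(k,\alpha)}(u)=\frac{u^{k-1}(u-\alpha)}{u^k-\alpha^k}=\frac{u^k-\alpha u^{k-1}}{u^k-\alpha^k}.
\]
The first step is to rewrite this in terms of the variable $w=\alpha/u$, which is small when $|u|>|\alpha|$: dividing numerator and denominator by $u^k$ gives
\[
F_{\Omega,0}^{(k,\alpha)}(u)=\frac{1-\alpha/u}{1-(\alpha/u)^k}=\frac{1-w}{1-w^k}.
\]

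The second step is to expand $\frac{1-w}{1-w^k}$ as a geometric-type series. Since $\frac{1}{1-w^k}=\sum_{m\geq0}w^{km}$ converges for $|w|<1$ (equivalently $|u|>|\alpha|$), we obtain
\[
F_{\Omega,0}^{(k,\alpha)}(u)=(1-w)\sum_{m=0}^\infty w^{km}=\sum_{m=0}^\infty w^{km}-\sum_{m=0}^\infty w^{km+1}.
\]
Substituting $w=\alpha/u$ back in, the coefficient of $u^{-l}$ is $\alpha^l$ when $l=km$ for some $m\geq0$ (i.e. $l\equiv0\bmod k$), is $-\alpha^l$ when $l=km+1$ for some $m\geq0$ (i.e. $l\equiv1\bmod k$), and is $0$ for all other residues $l\equiv2,\ldots,k-1\bmod k$. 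This is exactly the claimed formula, and it also confirms that the expansion has no positive powers of $u$. One should note that $l=0$ and $l=1$ are handled correctly by the convention $b_{\Omega,0}^{(k,\alpha)}(l)=0$ for $l<0$ only matters when the two arithmetic progressions would overlap, which they do not since $k\geq2$.

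There is essentially no obstacle here: the only thing to be slightly careful about is the bookkeeping of which residue class each term $w^{km}$ and $w^{km+1}$ falls into, and checking that the two progressions $\{km\}$ and $\{km+1\}$ are disjoint (true because $k\geq2$) so that no cancellation or addition of coefficients occurs. The convergence of the geometric series in the stated region $|u|>|\alpha|$ is immediate. I would present the argument in the three displayed lines above followed by one sentence reading off the coefficients.
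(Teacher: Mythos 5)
Your proof is correct and is precisely the routine computation the paper has in mind (it states this as one of "two simple lemmata" with no written proof): writing $F_{\Omega,0}^{(k,\alpha)}(u)=\frac{1-w}{1-w^k}$ with $w=\alpha/u$ and expanding the geometric series for $|u|>|\alpha|$ immediately yields the claimed coefficients. The only blemish is the garbled penultimate sentence about the convention $b_{\Omega,0}^{(k,\alpha)}(l)=0$ for $l<0$, which is irrelevant here (it is only needed later for the shifted coefficients in the next lemma) and should be deleted or rephrased.
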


\begin{lem}\label{structure-Laurent-coeffcients-b_Omega-2}
\bey 
b_{\Omega,t}^{(k,\alpha)}(l)&=&\alpha^t b_{\Omega,0}^{(k,\alpha)}(l-t),\hspace{.5cm}t\geq0.\nonumber 
\eey
\end{lem}

\section{Limit theorem for $\xi_N$}\label{section-limit-thm}
As commonly done in Probability Theory, weak convergence of measures is obtained by showing the pointwise convergence of the corresponding characteristic functions (inverse Fourier transforms). Let us define, for $\lambda\in\R$,  
\be\varphi_{\Omega,N}^{(k,\alpha)}(\lambda)=\mathbb E_{P_{\Omega,N}^{(k,\alpha)}}(e^{i\lambda \xi_N}).
\nonumber\ee 
Notice that the chosen normalizations for $P_{\Omega,N}^{(k,\alpha)}$ 
gives $\varphi_{\Omega,N}^{(k,\alpha)}(0)=1$. Moreover, when $P_{\Omega,N}^{(k,\alpha)}$ is a \emph{probability} measure (i.e. when $\alpha$ is a positive real number), then we have the inequality $|\varphi_{\Omega,N}^{(k,\alpha)}(\lambda)|\leq 1$, but this is in general not true. Instead we have, for $|\alpha|<2$, 
\bey
\left|\varphi_{\Omega,N}^{(k,\alpha)}(\lambda)\right|&\leq& \mathbb E_{|P_{\Omega,N}^{(k,\alpha)}|}1=\frac{Z_{\Omega,N}^{(k,|\alpha|)}}{\left|Z_{\Omega,N}^{(k,\alpha)}\right|}=\frac{C_\Omega(k,|\alpha|)}{|C_\Omega(k,\alpha)|}\frac{(\log N)^{|\alpha|}\left(1+O(1/\log N)\right)}{(\log N)^{\Re\alpha}\left(1+O(1/\log N)\right)}=\nonumber\\
&=&O\!\left((\log N)^{|\alpha|-\Re\alpha}\right),
\label{trivial-estimate-varphi}
\eey
where the constant implied by the $O$-notation depends only on 
$k$, and $\alpha$, uniformly in $\lambda$.

Recall $\varphi^{(\alpha)}(\lambda)$ from (\ref{varphi-alpha}), i.e. the characteristic function  of the $\alpha$-convolution of the Dickman-de Bruijn distribution. 
We prove the following
\begin{theorem}\label{pw-convergence-varphi_xi_N}
Let $|\alpha|<2$, and assume that $\lambda=o(\log N)$ as $N\to\infty$. 
Then 
\be \varphi_{\Omega,N}^{(k,\alpha)}(\lambda)= \varphi^{(\alpha)}(\lambda)\left(1 + O\!\left(\frac{1}{\log N}\right) 
+O\!\left(\varepsilon \log|\varepsilon|\right)
\right),
\nonumber\ee 
where $\varepsilon=\frac{\lambda}{\log N}=o(1)$ as $N\to\infty$. 
The constants implied by the $O$-notation depend only on $k$  
and $\alpha$. 
\end{theorem}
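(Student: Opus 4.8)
The plan is to exploit the independence of the $\nu(p)$'s with respect to $P_{\Omega,N}^{(k,\alpha)}$ (noted in the remark after Lemma~\ref{lem-distri-nu_j}) to factor the characteristic function, and then to take logarithms and compare term-by-term with $\log\varphi^{(\alpha)}(\lambda) = \alpha\int_0^1\frac{e^{i\lambda v}-1}{v}\de v$. Since $\xi_N = \sum_{p\leq N}\nu(p)\frac{\log p}{\log N}$ is a sum of independent random variables, we have
\be
\varphi_{\Omega,N}^{(k,\alpha)}(\lambda) = \prod_{p\leq N}\mathbb E_{P_{\Omega,N}^{(k,\alpha)}}\!\left(e^{i\lambda\nu(p)\frac{\log p}{\log N}}\right) = \prod_{p\leq N}\sum_{t=0}^{k-1}F_{\Omega,t}^{(k,\alpha)}(p)\,e^{it\lambda\frac{\log p}{\log N}}.\nonumber
\ee
First I would isolate the $t=0$ term and the $t=1$ term: by Lemma~\ref{lem-distri-nu_j}, $F_{\Omega,0}^{(k,\alpha)}(p) = 1 - \frac{\alpha}{p} + O(p^{-2})$ and $F_{\Omega,1}^{(k,\alpha)}(p) = \frac{\alpha}{p} + O(p^{-2})$ (uniformly, using the Laurent expansion and Lemmata~\ref{structure-Laurent-coeffcients-b_Omega-1}--\ref{structure-Laurent-coeffcients-b_Omega-2}), while $F_{\Omega,t}^{(k,\alpha)}(p) = O(p^{-t}) = O(p^{-2})$ for $t\geq 2$. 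Hence the $p$-th factor equals $1 + \frac{\alpha}{p}\bigl(e^{i\lambda\frac{\log p}{\log N}} - 1\bigr) + O(p^{-2})$, and since $|\alpha|<2$ the correction is bounded away from $-1$ for all but finitely many $p$, so we may take logarithms on a fixed branch.

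Next I would write $\log\varphi_{\Omega,N}^{(k,\alpha)}(\lambda) = \sum_{p\leq N}\frac{\alpha}{p}\bigl(e^{i\lambda\frac{\log p}{\log N}} - 1\bigr) + \sum_{p\leq N}O(p^{-2}) + (\text{finitely many bounded terms}) = \alpha\sum_{p\leq N}\frac{1}{p}\bigl(e^{i\lambda\frac{\log p}{\log N}} - 1\bigr) + O(1/\log N) + O(1)$. Wait—this produces an $O(1)$, not the multiplicative $1+o(1)$ claimed; the resolution is that the same $O(1)$ constant (a convergent product over small primes and the tail $\sum p^{-2}$ corrections) appears in the asymptotics of both $Z_{\Omega,N}^{(k,|\alpha|)}$ and $Z_{\Omega,N}^{(k,\alpha)}$ used to normalize, so the constant is absorbed: more precisely, I would compare $\varphi_{\Omega,N}^{(k,\alpha)}(\lambda)$ directly with $\varphi^{(\alpha)}(\lambda)$ by estimating the difference of logarithms, where the $N$-independent constants cancel exactly because $\varphi_{\Omega,N}^{(k,\alpha)}(0)=\varphi^{(\alpha)}(0)=1$ forces the constant terms to match. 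So the genuine content is the estimate
\be
\sum_{p\leq N}\frac{1}{p}\bigl(e^{i\lambda\frac{\log p}{\log N}} - 1\bigr) = \int_0^1\frac{e^{i\lambda v}-1}{v}\de v + \text{error},\nonumber
\ee
which is a Riemann-sum / partial-summation comparison: by Mertens' theorem $\sum_{p\leq t}\frac{1}{p} = \log\log t + M + O(1/\log t)$, so setting $g(v) = (e^{i\lambda v}-1)/v$ and summing by parts against the measure $\de(\sum_{p\leq p}1/p)$ pushed forward under $p\mapsto \frac{\log p}{\log N}$, the sum becomes $\int_0^1 g(v)\,\de v$ plus an error controlled by $\|g'\|$ on $[1/\log N, 1]$ times the Mertens error, plus the contribution of small $v$. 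The derivative $g'(v)$ blows up like $\lambda^2$ near $0$ but $g$ itself is bounded by $\min(|\lambda|, 2/v)$, and a careful split at $v \asymp 1/|\lambda|$ yields precisely the $O(\varepsilon\log|\varepsilon|)$ term with $\varepsilon = \lambda/\log N$: the logarithm arises from $\int_{1/\log N}^{1/|\lambda|}$ of a $1/v$-type density, reflecting the logarithmic singularity of $g$ at the origin.

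The main obstacle will be the careful bookkeeping in this last Riemann-sum estimate—tracking how the Mertens error term $O(1/\log t)$ interacts with the oscillating, mildly singular integrand $g(v)$ across the range $v\in[(\log 2)/\log N,\, 1]$, and in particular extracting the sharp $O(\varepsilon\log|\varepsilon|)$ rather than a cruder bound. I expect this is exactly what is deferred to Appendix~A. The factorization, the logarithm step, and the cancellation of $N$-independent constants are routine given independence and $|\alpha|<2$; the delicate point is the uniformity in $\lambda$ (valid in the stated range $\lambda = o(\log N)$) and the precise shape of the second error term, which is what makes Theorem~\ref{pw-convergence-varphi_xi_N} strong enough to feed into the proof of Theorem~\ref{thm-1}.
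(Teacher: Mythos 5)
Your skeleton is the paper's: factor $\varphi_{\Omega,N}^{(k,\alpha)}$ over primes by independence, isolate the dominant part $1+\frac{\alpha}{p}\bigl(e^{i\lambda\frac{\log p}{\log N}}-1\bigr)$ of each factor, take logarithms away from finitely many small primes, and convert $\sum_{p\le N}\frac{1}{p}\bigl(e^{i\lambda\frac{\log p}{\log N}}-1\bigr)$ into $\alpha\int_0^1\frac{e^{i\lambda v}-1}{v}\,\de v$ by partial summation, with the $O(\varepsilon\log|\varepsilon|)$ coming from the $1/v$ singularity on $[1/\log N,\,1/|\lambda|]$. The only real difference is that you sum by parts against Mertens' $\sum_{p\le t}1/p=\log\log t+M+O(1/\log t)$ while the paper sums against $\pi(u)$ and the Prime Number Theorem; this is immaterial, and your identification of where $\varepsilon\log|\varepsilon|$ arises is exactly right.

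There is, however, one step argued incorrectly: your disposal of the $O(p^{-2})$ corrections. After writing $\log z(p)=\frac{\alpha}{p}\bigl(e^{i\lambda\frac{\log p}{\log N}}-1\bigr)+c_p(\lambda,N)$ with $c_p=O(p^{-2})$, the sum $\sum_{p\le N}c_p(\lambda,N)$ is indeed a convergent, generally nonzero quantity — but it is not an $N$-independent \emph{constant}, and the claim that it is ``absorbed'' because $\varphi_{\Omega,N}^{(k,\alpha)}(0)=\varphi^{(\alpha)}(0)=1$ is not an argument: knowing that $\sum_p c_p(0,N)=0$ says nothing about $\sum_p c_p(\lambda,N)$ for the $\lambda$ at hand. (The appeal to $Z_{\Omega,N}^{(k,|\alpha|)}$ is also off target; only $Z_{\Omega,N}^{(k,\alpha)}$ enters the normalization.) What is actually needed — and what the paper establishes in Lemma \ref{mathcal-E^k-alpha_Omega(u,lambda,N)} via the cancellation $\sum_{j}\alpha^j b_{\Omega,0}^{(k,\alpha)}(l-j)=0$ from Lemmata \ref{structure-Laurent-coeffcients-b_Omega-1}--\ref{structure-Laurent-coeffcients-b_Omega-2} — is that each correction carries an explicit factor $e^{i\lambda j\frac{\log p}{\log N}}-1$, i.e.\ $c_p(\lambda,N)=O\bigl(\sum_{j=1}^{k-1}p^{-j-2}|e^{i\lambda j\frac{\log p}{\log N}}-1|\bigr)$, whence $\sum_p|c_p|\ll\sum_p\min(1,\varepsilon\log p)/p^2=O(\varepsilon)$. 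This is a quantitative modulus-of-continuity statement in $\lambda$, not a normalization identity; once you replace your cancellation heuristic by this estimate (which your own expansion of $F_{\Omega,t}^{(k,\alpha)}$ already puts within reach), the proof closes along the lines you describe, with the remaining bookkeeping matching the paper's Appendix A.
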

\begin{remark}
The case of $\lambda\to0$ is of no harm in dealing with characteristic functions of normalized measures (recall that $\varphi_{\Omega,N}^{(k,\alpha)}(0)=1$), and the error-term bounds obtained are the same as for $\lambda=O(1)$. The term $O\!\left(\frac{1}{\log N}\right)$ in the theorem above prevents an underestimate of the error term in the (uninteresting) case when $\lambda\to0$, i.e. $\varepsilon=o\!\left(\frac{1}{\log N}\right)$.
The interesting application of the above result is for $|\lambda|\to\infty$ more slowly than $\log N$, and the corresponding error term can be simply written as $O(\varepsilon\log|\varepsilon|)$.
\end{remark}

\begin{remark}
Theorem \ref{pw-convergence-varphi_xi_N} generalizes the main result in \cite{Cellarosi-Sinai-Mobius-1, Cellarosi-Sinai-Mobius-2} (where the case $(k,\alpha)=(2,1)$ is addressed), and provides an explicit error term (as opposed to simply an error term $o(1)$).
\end{remark}
A consequence of Theorem \ref{pw-convergence-varphi_xi_N} is the following
\begin{prop}\label{prop-varphi-tau}
Let us fix $k\geq 2$, 
$|\alpha|<2$, 
$f\in\mathcal S_{\eta}$ with $\eta\geq |\alpha|-\Re\alpha+1$. 
Then for every $R=R(N)$ such that 
\be \frac{R}{\log N}\to0\hspace{.5cm}\mbox{and}\hspace{.5cm}\frac{(\log N)^{|\alpha|-\Re\alpha}}{R^{\eta-1}}\to0\hspace{.5cm}\mbox{as $N\to\infty$}\nonumber\ee
\bey
\int_{\R}\varphi_{\Omega,N}^{(k,\alpha)}(\lambda)\hat f(\lambda)\de\lambda&=&\int_{|\lambda|\leq R}\varphi^{(\alpha)}(\lambda)\hat f(\lambda)\de \lambda + \varepsilon_N,\nonumber
\eey
where $\varepsilon_N=\varepsilon_N(k,\alpha,f,R)\to0$ as $N\to\infty$ satisfies 
\be\varepsilon_N=O\!\left(\frac{\log\log N}{\log N}\right)+O\!\left(\frac{(\log N)^{|\alpha|-\Re\alpha}}{R^{\eta-1}}\right).\nonumber
\ee
\end{prop}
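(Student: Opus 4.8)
The plan is to split the integral $\int_\R \varphi_{\Omega,N}^{(k,\alpha)}(\lambda)\hat f(\lambda)\,\de\lambda$ into the region $|\lambda|\leq R$ and its complement, handle the tail $|\lambda|>R$ using the trivial bound (\ref{trivial-estimate-varphi}) together with the decay of $\hat f$, and on the bulk $|\lambda|\leq R$ replace $\varphi_{\Omega,N}^{(k,\alpha)}$ by $\varphi^{(\alpha)}$ at the cost of the multiplicative error from Theorem \ref{pw-convergence-varphi_xi_N}. First I would write
\[
\int_\R \varphi_{\Omega,N}^{(k,\alpha)}(\lambda)\hat f(\lambda)\,\de\lambda
=\int_{|\lambda|\leq R}\varphi_{\Omega,N}^{(k,\alpha)}(\lambda)\hat f(\lambda)\,\de\lambda
+\int_{|\lambda|>R}\varphi_{\Omega,N}^{(k,\alpha)}(\lambda)\hat f(\lambda)\,\de\lambda.
\]
For the tail, since $f\in\mathcal S_\eta$ gives $|\hat f(\lambda)|\leq C/(1+|\lambda|^\eta)$ and (\ref{trivial-estimate-varphi}) gives $|\varphi_{\Omega,N}^{(k,\alpha)}(\lambda)|=O((\log N)^{|\alpha|-\Re\alpha})$ uniformly in $\lambda$, the tail is bounded by $O\big((\log N)^{|\alpha|-\Re\alpha}\int_{|\lambda|>R}|\lambda|^{-\eta}\,\de\lambda\big)=O\big((\log N)^{|\alpha|-\Re\alpha}R^{1-\eta}\big)$, which is the second error term (this is where the hypothesis $\eta\geq|\alpha|-\Re\alpha+1>1$ and the assumption $(\log N)^{|\alpha|-\Re\alpha}/R^{\eta-1}\to0$ are used).

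Next, on $|\lambda|\leq R$ I would apply Theorem \ref{pw-convergence-varphi_xi_N}. Since $R/\log N\to0$, every $\lambda$ with $|\lambda|\leq R$ satisfies $\lambda=o(\log N)$, so
\[
\varphi_{\Omega,N}^{(k,\alpha)}(\lambda)=\varphi^{(\alpha)}(\lambda)\Big(1+O\big(\tfrac{1}{\log N}\big)+O\big(\varepsilon\log|\varepsilon|\big)\Big),\qquad \varepsilon=\tfrac{\lambda}{\log N}.
\]
Plugging this in, the main term is exactly $\int_{|\lambda|\leq R}\varphi^{(\alpha)}(\lambda)\hat f(\lambda)\,\de\lambda$, and the correction is bounded by a constant times
\[
\int_{|\lambda|\leq R}\big|\varphi^{(\alpha)}(\lambda)\big|\,\big|\hat f(\lambda)\big|\,\Big(\tfrac{1}{\log N}+\tfrac{|\lambda|}{\log N}\big|\log\tfrac{|\lambda|}{\log N}\big|\Big)\de\lambda.
\]
Here I would use the fact (from (\ref{label-asymptotic-decay-varphi(lambda)}), since $\varphi^{(\alpha)}=(\varphi^{(1)})^\alpha$) that $|\varphi^{(\alpha)}(\lambda)|\ll (1+|\lambda|)^{-\Re\alpha}$, together with $|\hat f(\lambda)|\ll(1+|\lambda|)^{-\eta}$. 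The $O(1/\log N)$ part contributes $O(1/\log N)$ times a convergent integral (convergent because $\eta+\Re\alpha>1$), which is absorbed into $O(\log\log N/\log N)$. The delicate part is the $O(\varepsilon\log|\varepsilon|)$ term: it gives
\[
\frac{1}{\log N}\int_{|\lambda|\leq R}\frac{|\lambda|\,\big|\log(|\lambda|/\log N)\big|}{(1+|\lambda|)^{\eta+\Re\alpha}}\,\de\lambda
\ll\frac{\log\log N}{\log N}\int_{|\lambda|\leq R}\frac{|\lambda|}{(1+|\lambda|)^{\eta+\Re\alpha}}\,\de\lambda,
\]
where I pulled out $|\log(|\lambda|/\log N)|\leq \log\log N + O(\log(1+|\lambda|)) = O(\log\log N)$ on the relevant range after a small-$\lambda$ check; the remaining integral is $O(1)$ if $\eta+\Re\alpha>2$ and otherwise grows like a power of $R$ which, after multiplication by $R/\log N$-type factors, is dominated by the tail term already extracted — so in all cases the contribution is $O(\log\log N/\log N)+O((\log N)^{|\alpha|-\Re\alpha}R^{1-\eta})$. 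Collecting the three pieces yields the claimed $\varepsilon_N=O(\log\log N/\log N)+O((\log N)^{|\alpha|-\Re\alpha}/R^{\eta-1})$.

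The main obstacle I expect is bookkeeping the $\varepsilon\log|\varepsilon|$ correction uniformly over $|\lambda|\leq R$: one must be careful that $\log|\varepsilon|=\log|\lambda|-\log\log N$ can be large (negative) for very small $\lambda$, but there it is tamed by the factor $|\lambda|$, and large (in the $\log\log N$ direction) for $\lambda$ of order up to $R$, where one needs the decay $(1+|\lambda|)^{-(\eta+\Re\alpha)}$ with $\eta+\Re\alpha>1$ to keep the integral under control — and, when $\eta+\Re\alpha\leq 2$, to verify that the resulting power of $R$ is genuinely no worse than the tail contribution $(\log N)^{|\alpha|-\Re\alpha}R^{1-\eta}$. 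A clean way to organize this is to treat separately $|\lambda|\leq 1$ (where $\varphi^{(\alpha)}$ and $\hat f$ are bounded and $\int |\lambda|\,|\log(|\lambda|/\log N)|\,\de\lambda$ is elementary) and $1<|\lambda|\leq R$ (where the power-law decay governs everything), which avoids any singularity issues at the origin.
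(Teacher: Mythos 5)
Your proposal is correct and follows essentially the same route as the paper: split at $|\lambda|=R$, bound the tail via the trivial estimate (\ref{trivial-estimate-varphi}) and the decay of $\hat f$, and on the bulk substitute Theorem \ref{pw-convergence-varphi_xi_N}, controlling the $O(\varepsilon\log|\varepsilon|)$ correction by splitting further at $|\lambda|=1$ and using $|\varphi^{(\alpha)}(\lambda)|=O(|\lambda|^{-\Re\alpha})$. The paper's proof carries out exactly this bookkeeping (including the case distinction at $\Re\alpha+\eta=2$ that you flag as needing verification), so no substantive difference remains.
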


\begin{proof}
For every $R=R(N)$ such that $R(N)=o(\log N)$ as $N\to\infty$, let us write
\bey
\int_{\R}\varphi_{\Omega,N}^{(k,\alpha)}(\lambda)\hat f(\lambda)\de \lambda=\int_{|\lambda|\leq R}+\int_{|\lambda|>R}=\mathcal I_1+\mathcal I_2.\nonumber
\eey
Now we can  use Theorem \ref{pw-convergence-varphi_xi_N}:
\bey
\mathcal I_1&=&\int_{|\lambda|\leq R}\varphi^{(\alpha)}(\lambda)\hat f(\lambda)\de\lambda+O\!\left(\int_{|\lambda|<R}\varphi^{(\alpha)}(\lambda)\frac{\lambda}{\log N}\log\!\left|\frac{\lambda}{\log N}\right|\hat f(\lambda)\de\lambda\right)+\nonumber\\
&&+O\!\left(\int_{|\lambda|\leq R}\varphi^{(\alpha)}(\lambda)\frac{1}{\log N}\hat f(\lambda)\de\lambda\right)=
\mathcal I_{1,1}+\mathcal I_{1,2}+\mathcal I_{1,3}\nonumber.
\eey
Notice that, by (\ref{label-asymptotic-decay-varphi(lambda)}) and the fact that $\varphi^{(\alpha)}(\lambda)=(\varphi^{(1)}(\lambda))^\alpha$, we have $\varphi^{(\alpha)}(\lambda)=O\!\left(\lambda^{-\Re \alpha}\right)$ as $\lambda\to\infty$. We can write
\bey
\mathcal I_{1,2}&=&O\!\left(\int_{|\lambda|\leq 1}\left|\frac{\lambda}{\log N}\log\!\left|\frac{\lambda}{\log N}\right|\right|\de\lambda\right)+O\!\left(\int_{1<|\lambda|\leq R} 
\frac{\lambda^{1-\Re \alpha-\eta}}{\log N}\log\left|\frac{\lambda}{\log N}\right|\de\lambda\right)=\nonumber\\
&=&O\!\left(\frac{\log\log N}{\log N}\right)+
\begin{cases}O\!\left(\frac{\log R}{\log N}\log\!\left(\frac{\log^2 N}{R}\right)\right),&\Re\alpha+\eta=2;\\ 
O\!\left(\frac{\log\log N}{\log N}\right)+O\!\left(\frac{R^{2-\Re\alpha-\eta}}{\log N}\log\!\left(\frac{\log N}{R}\right)\right),&\mbox{otherwise};
\end{cases}\nonumber\\
\mathcal I_{1,3}&=&O\!\left(\frac{1}{\log N}\right)+O\!\left(\int_{1<\lambda\leq R}\frac{\lambda^{-\Re\alpha-\eta}}{\log N}\de \lambda\right)=\begin{cases}O\!\left(\frac{\log R}{\log N}\right),&\Re\alpha+\eta=1;\\O\!\left(\frac{R^{1-\Re\alpha-\eta}}{\log N}\right), &\mbox{otherwise}.\end{cases}\nonumber
\eey
Let us observe that $\mathcal I_{1,3}=O(\mathcal I_{1,2})$. To estimate $\mathcal I_2$ we use the trivial estimate (\ref{trivial-estimate-varphi}) and the fact that $\eta>1$:
\bey
\mathcal I_2&=&O\!\left((\log N)^{|\alpha|-\Re\alpha}\int_{\lambda>R}\frac{\de\lambda}{\lambda^{\eta}}\right)=O\!\left(\frac{(\log N)^{|\alpha|-\Re\alpha}}{R^{\eta-1}}\right).\nonumber
\eey
Summarizing, 
\bey
\int_{\R}\varphi_{\Omega,N}^{(k,\alpha)}(\lambda)\hat f(\lambda)\de\lambda&=&\int_{|\lambda|\leq R}\varphi^{(\alpha)}(\lambda)\hat f(\lambda)\de\lambda +O\!\left(\frac{\log\log N}{\log N}\right)+O\!\left(\frac{(\log N)^{|\alpha|-\Re\alpha}}{R^{\eta-1}}\right)+\label{summarizing-int-varphiNhatf0}\\
&&+\begin{cases}O\!\left(\frac{\log R}{\log N}\log\!\left(\frac{\log^2 N}{R}\right)\right),&\Re\alpha+\eta=2;\\
O\!\left(\frac{R^{2-\Re\alpha-\eta}}{\log N}\log\!\left(\frac{\log N}{R}\right)\right),&\mbox{otherwise}.\end{cases}\label{summarizing-int-varphiNhatf}
\eey

One can check that the term in (\ref{summarizing-int-varphiNhatf}) is dominated by those in (\ref{summarizing-int-varphiNhatf0}).
\end{proof}

Theorem \ref{thm-1} follows now immediately. In fact,
\be
S_{\Omega,f}(k,\alpha;N)=Z_{\Omega,N}^{(k,\alpha)}\int_{\R}\varphi_{\Omega,N}^{(k,\alpha)}(\lambda)\hat f(\lambda)\de\lambda,\nonumber
\ee
and Lemma \ref{lem-asymptotic-ZNkq} and Proposition \ref{prop-varphi-tau} give the desired statements
(\ref{statement-thm-1})-
(\ref{epsilon-in-main-theorem}).

\section{Proof of Theorem \ref{pw-convergence-varphi_xi_N}}\label{section-proof-of-theorem-pw...}
We shall need the following result

\begin{lem}\label{mathcal-E^k-alpha_Omega(u,lambda,N)}
Let \bey
&&\mathcal E^{(k,\alpha)}_\Omega(u,\lambda,N)=\frac{\de}{\de u}F_{\Omega,0}^{(k,\alpha)}(u)-\frac{\alpha}{u^2}+\left(\frac{\de}{\de u}F_{\Omega,1}^{(k,\alpha)}(u)+\frac{\alpha}{u^2}\right)e^{i\lambda\frac{\log u}{\log N}}+\sum_{t=2}^{k-1}\frac{\de }{\de u}F_{\Omega,t}^{(k,\alpha)}(u)e^{i\lambda t\frac{\log u}{\log  N}},\nonumber
\eey
where $|u|>|\alpha|$. Then
\be
\mathcal E^{(k,\alpha)}_\Omega(u,\lambda,N)=O\!\left(\sum_{j=1}^{k-1}\frac{\alpha^j\left(e^{i\lambda j\frac{\log u}{\log N}}-1\right)}{u^{j+2}}\right),\nonumber
\ee
where the constant implied by the $O$-notation depends on $j$, $k$ and $\alpha$.
\end{lem}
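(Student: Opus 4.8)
The plan is to compute $\mathcal E^{(k,\alpha)}_\Omega(u,\lambda,N)$ explicitly from the definitions of the $F_{\Omega,t}^{(k,\alpha)}$ and show that the "main parts" cancel, leaving only terms that carry a factor $e^{i\lambda j\frac{\log u}{\log N}}-1$ for some $1\le j\le k-1$ together with a decay $u^{-(j+2)}$. The key observation is that $\frac{\alpha}{u^2}$ is exactly $-\frac{\de}{\de u}\big(\frac{\alpha}{u}\big)$, and $\frac{\alpha}{u}$ is the leading Laurent term of $F_{\Omega,1}^{(k,\alpha)}(u)$ at infinity (by Lemma \ref{structure-Laurent-coeffcients-b_Omega-1} and Lemma \ref{structure-Laurent-coeffcients-b_Omega-2}, $b_{\Omega,1}^{(k,\alpha)}(1)=\alpha\cdot b_{\Omega,0}^{(k,\alpha)}(0)=\alpha$), while $F_{\Omega,0}^{(k,\alpha)}(u)=1-\frac{\alpha}{u}+\cdots$ has leading Laurent term $1$ and next term $-\frac{\alpha}{u}$, whose derivative $\frac{\alpha}{u^2}$ is cancelled by the explicit $-\frac{\alpha}{u^2}$. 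So the $j=0$ contributions (the would-be $O(u^{-2})$ terms not multiplied by any oscillating factor) cancel identically.

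The natural way to organize this is to work with the Laurent expansions on $|u|>|\alpha|$. Write
\be
F_{\Omega,0}^{(k,\alpha)}(u)=\sum_{l\ge 0}\frac{b_{\Omega,0}^{(k,\alpha)}(l)}{u^l},\qquad
F_{\Omega,t}^{(k,\alpha)}(u)=\sum_{l\ge 0}\frac{\alpha^t b_{\Omega,0}^{(k,\alpha)}(l-t)}{u^l},\nonumber
\ee
so that, grouping the sum in $\mathcal E^{(k,\alpha)}_\Omega$ by the common oscillatory factor $e^{i\lambda j\frac{\log u}{\log N}}$ with $j=0,1,\ldots,k-1$, the coefficient of $e^{i\lambda j\frac{\log u}{\log N}}$ (for $j\ge 1$) is $\frac{\de}{\de u}F_{\Omega,j}^{(k,\alpha)}(u)$ plus, when $j=1$, the extra $\frac{\alpha}{u^2}$; and the $j=0$ piece is $\frac{\de}{\de u}F_{\Omega,0}^{(k,\alpha)}(u)-\frac{\alpha}{u^2}$. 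Then I would split each oscillatory term as $e^{i\lambda j\frac{\log u}{\log N}} = 1 + \big(e^{i\lambda j\frac{\log u}{\log N}}-1\big)$. The terms with the "$1$" sum, after the cancellations noted above, to
\be
\frac{\de}{\de u}F_{\Omega,0}^{(k,\alpha)}(u)-\frac{\alpha}{u^2}+\frac{\de}{\de u}F_{\Omega,1}^{(k,\alpha)}(u)+\frac{\alpha}{u^2}+\sum_{t=2}^{k-1}\frac{\de}{\de u}F_{\Omega,t}^{(k,\alpha)}(u)
=\frac{\de}{\de u}\sum_{t=0}^{k-1}F_{\Omega,t}^{(k,\alpha)}(u)=\frac{\de}{\de u}(1)=0,\nonumber
\ee
since $\sum_{t=0}^{k-1}F_{\Omega,t}^{(k,\alpha)}(u)\equiv 1$ (these are, for $u=p$, the probabilities of Lemma \ref{lem-distri-nu_j}, which sum to one; alternatively this is immediate from \eqref{def-F_Omega,t} as a geometric-sum identity). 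What remains is exactly
\be
\mathcal E^{(k,\alpha)}_\Omega(u,\lambda,N)=\sum_{j=1}^{k-1}\Big(e^{i\lambda j\frac{\log u}{\log N}}-1\Big)G_j(u),\qquad
G_j(u)=\frac{\de}{\de u}F_{\Omega,j}^{(k,\alpha)}(u)+\delta_{j,1}\frac{\alpha}{u^2}.\nonumber
\ee

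It then remains to check that $G_j(u)=O(\alpha^j u^{-(j+2)})$ on $|u|>|\alpha|$, uniformly, with the implied constant depending only on $j,k,\alpha$. For $j\ge 2$ this follows from Lemma \ref{structure-Laurent-coeffcients-b_Omega-2}: $F_{\Omega,j}^{(k,\alpha)}(u)=\alpha^j b_{\Omega,0}^{(k,\alpha)}(0)u^{-j}+\cdots = \alpha^j u^{-j}+\cdots$ has no $u^{-(j+1)}$ term (since $b_{\Omega,0}^{(k,\alpha)}(1)=-\alpha^1$ only contributes when shifted, giving $-\alpha^{j+1}u^{-(j+1)}$ — so actually I need to be a touch more careful, but the point is that after differentiating, $\frac{\de}{\de u}(\alpha^j u^{-j})=-j\alpha^j u^{-(j+1)}$, and one must see this leading term is also killed). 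The cleaner route: use $F_{\Omega,t}^{(k,\alpha)}(u)=\alpha^t u^{k-1-t}(u-\alpha)/(u^k-\alpha^k)$ directly, write $F_{\Omega,j}^{(k,\alpha)}(u)=\frac{\alpha^j}{u^j}\cdot\frac{1-\alpha/u}{1-\alpha^k/u^k}=\frac{\alpha^j}{u^j}\big(1-\frac{\alpha}{u}\big)\big(1+\frac{\alpha^k}{u^k}+\cdots\big)$, so $F_{\Omega,j}^{(k,\alpha)}(u)=\frac{\alpha^j}{u^j}-\frac{\alpha^{j+1}}{u^{j+1}}+O(\alpha^{j+k}u^{-(j+k)})$. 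Differentiating, $\frac{\de}{\de u}F_{\Omega,j}^{(k,\alpha)}(u)=-\frac{j\alpha^j}{u^{j+1}}+\frac{(j+1)\alpha^{j+1}}{u^{j+2}}+O(u^{-(j+k+1)})$. The leading term $-j\alpha^j u^{-(j+1)}$ is the obstruction: for $j\ge 2$ it is \emph{not} cancelled, so I would instead rescale — pull out the full factor $e^{i\lambda j\frac{\log u}{\log N}}$ and keep $G_j(u)$ of size $O(\alpha^j u^{-(j+1)})$; but the stated bound has $u^{-(j+2)}$, so the mechanism must be that in the eventual application $j=1$ is the dominant case and for $j\ge 2$ one absorbs $u^{-(j+1)}$ into the $j=1$ bound $u^{-3}$ (valid since $|u|>|\alpha|$ forces $|\alpha^j u^{-(j+1)}|\le |\alpha|\cdot|u|^{-1}\cdot |\alpha u^{-1}|^{\,?}$...).

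\medskip

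\noindent\emph{Reconciling the statement:} re-examining, for $j=1$ the extra $+\alpha/u^2$ precisely cancels the leading $-j\alpha^j u^{-(j+1)}=-\alpha u^{-2}$ term of $\frac{\de}{\de u}F_{\Omega,1}^{(k,\alpha)}(u)$, so $G_1(u)=O(\alpha^2 u^{-3})$, matching $j=1$ in the asserted sum. For $j\ge 2$ there is no compensating term and $G_j(u)=-j\alpha^j u^{-(j+1)}+O(\cdots)$; writing $\alpha^j u^{-(j+1)}=\alpha^{j-1}u^{-(j-1)}\cdot\alpha u^{-2}$ and noting $|\alpha u^{-1}|<1$ on the region, one has $|G_j(u)|\le C|\alpha|^j|u|^{-(j+1)}\le C|\alpha|^3|u|^{-3}$, so all of $\mathcal E^{(k,\alpha)}_\Omega$ is controlled by the $j=1$ term of the displayed sum; since the $O$ in the statement is a sum over $j$, one may also simply state $G_j(u)=O(\alpha^j u^{-(j+1)})$ and observe each is $O$ of the $j$-th summand times $|u|$, i.e. the displayed sum dominates after one more application of $|u|>|\alpha|$. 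I would therefore present the proof as: (i) the algebraic cancellation giving $\mathcal E^{(k,\alpha)}_\Omega=\sum_{j=1}^{k-1}(e^{i\lambda j\log u/\log N}-1)G_j(u)$; (ii) the Laurent estimate $|G_j(u)|\ll_{j,k,\alpha}|\alpha|^{j}|u|^{-(j+2)}$ for $j=1$ directly from the cancellation of the $u^{-2}$ term, and for $j\ge2$ from $|G_j(u)|\ll |\alpha|^j|u|^{-(j+1)}\le|\alpha|^{j}|u|^{-(j+2)}\cdot|u|\cdot|u|^{-1}$... — precisely, since the target is a \emph{sum}, bounding $|e^{i\theta_j}-1|\,|G_j(u)|\le |e^{i\theta_j}-1|\cdot C|\alpha|^j|u|^{-(j+1)}$ and then noting $|u|^{-(j+1)}=|u|^{-(j+2)}\cdot|u|\le |u|^{-(j+2)}\cdot$ (anything) is false for large $|u|$, so the honest statement needs the $u^{-(j+2)}$ only because, in the application in Section \ref{section-proof-of-theorem-pw...}, $u$ ranges over primes and the relevant quantity is summed/integrated, where an extra $\log u$ or $1/u$ is harmless.

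\medskip

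\noindent The main obstacle, then, is not the cancellation (which is clean and forced by $\sum_t F_{\Omega,t}^{(k,\alpha)}\equiv1$ plus the match between $\alpha/u$ and the subleading Laurent coefficient of $F_{\Omega,0}^{(k,\alpha)}$), but getting the \emph{exact} power $u^{-(j+2)}$ in the $O$-term; this requires noticing that the explicitly-added correction terms $\mp\alpha/u^2$ are tuned precisely to kill the $u^{-2}$ contribution of the $j=1$ piece, and that for $j\ge 2$ one leverages $|u|>|\alpha|$ so that the genuine size $|\alpha|^j|u|^{-(j+1)}$ is bounded by $|\alpha|^{j}|u|^{-(j+2)}$ up to the harmless factor $|u/\alpha|\ge 1$ reabsorbed into the $(j,k,\alpha)$-dependent constant after restricting $|u|$ to a compact-away-from-$\infty$ complement (equivalently, one writes $G_j(u)=O(\alpha^j(e^{\cdots}-1)u^{-(j+2)})$ by keeping one factor $|e^{i\lambda j\log u/\log N}-1|=O(\lambda\log u/\log N)$ small). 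I would carry out steps (i)–(ii) and close with this observation.
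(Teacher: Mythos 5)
Your proposal is correct and follows essentially the same route as the paper: Laurent-expand each $F_{\Omega,t}^{(k,\alpha)}$ at infinity, cancel the non-oscillatory contributions, and regroup by the factors $e^{i\lambda j\frac{\log u}{\log N}}-1$. Your way of seeing the cancellation, via $\sum_{t=0}^{k-1}F_{\Omega,t}^{(k,\alpha)}\equiv 1$ together with the mutual cancellation of the two $\mp\alpha/u^2$ corrections, is a cleaner packaging of the paper's coefficient-by-coefficient identity $\sum_{j}\alpha^j b_{\Omega,0}^{(k,\alpha)}(l-j)=0$; it is the same fact. The point you wrestle with at the end is genuine, and your final resolution is the right one, so state it cleanly and discard the abandoned alternatives. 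Concretely: for $j\ge 2$ the coefficient $G_j(u)=\frac{\de}{\de u}F_{\Omega,j}^{(k,\alpha)}(u)$ really does have a surviving leading term $-j\alpha^j u^{-(j+1)}$ (only the $j=1$ piece is repaired by the added $\alpha/u^2$); the paper's own computation slides past this by starting the inner Laurent sums at $l=t+1$ where they should start at $l=t$. The asserted bound is nevertheless correct because, as you observe, $\left|e^{i\lambda j\frac{\log u}{\log N}}-1\right|\le j\left|e^{i\lambda \frac{\log u}{\log N}}-1\right|$ and $u^{-(j+1)}\le u^{-3}$ for $j\ge 2$ and $u\ge 1$ (which holds on the range $u\ge d_*$ where the lemma is applied), so each $j\ge 2$ term is dominated by the $j=1$ summand of the displayed sum, with a constant depending only on $j$, $k$, $\alpha$. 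With that observation made explicit, your steps (i) and (ii) constitute a complete proof — and on this particular point a more careful one than the paper's; the fallback appeal to harmlessness after integration over $u$ is not needed and should be dropped.
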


\begin{proof}
We have
\bey
&&\mathcal E^{(k,\alpha)}_\Omega(u,\lambda,N)=\nonumber\\
&&=\sum_{l=2}^\infty\frac{-l\,b_{\Omega,0}^{(k,\alpha)}(l)}{u^{l+1}}
+e^{i\lambda\frac{\log u}{\log N}}\sum_{l=2}^{\infty}\frac{-l\alpha\, b_{\Omega,0}^{(k,\alpha)}(l-1)}{u^{l+1}}
+\sum_{t=2}^{k-1}e^{i\lambda t\frac{\log u}{\log N}}\sum_{l=t+1}^\infty\frac{-l\alpha^t\, b_{\Omega,0}^{(k,\alpha)}(l-t)}{u^{l+1}}=\nonumber\\
&&=\sum_{l=2}^{\infty}\frac{-l}{u^{l+1}}\sum_{j=0}^{l\wedge (k-1)}\alpha^j b_{\Omega,0}^{(k,\alpha)}(l-j) e^{i\lambda j\frac{\log u}{\log N}}=\nonumber\\
&&=\sum_{l=2}^\infty\frac{-l}{u^{l+1}}\sum_{j=0}^{l\wedge (k-1)}\alpha^j b_{\Omega,0}^{(k,\alpha)}(l-j)\sum_{s=0}^\infty \frac{\left(i\lambda j \frac{\log u}{\log N}\right)^s}{s!}\label{Epsilon-before-removing-s=0},
\eey
where $l\wedge (k-1)$ denotes the minimum of $l$ and $k-1$.
The terms corresponding to $s=0$ in the sum (\ref{Epsilon-before-removing-s=0}) can be removed, as Lemmata \ref{structure-Laurent-coeffcients-b_Omega-1}-\ref{structure-Laurent-coeffcients-b_Omega-2} yield
\bey
\sum_{j=0}^{l\wedge(k-1)} \alpha^j b_{\Omega,0}(l-j)=0.\nonumber
\eey
In fact, if $2\leq l\leq k-1$, then $\sum_{j=0}^l\alpha^j b_{\Omega,0}^{(k,\alpha)}(l-j)=\alpha^{l-1}b_{\Omega,0}^{(k,\alpha)}(1)+\alpha^{l}b_{\Omega,0}^{(k,\alpha)}(0)=0$. If $l\geq k$, say $l=c k+d$, with $c\geq 1$ and $0\leq d\leq k-1$, then 
\be\sum_{j=0}^{k-1}\alpha^j b_{\Omega,0}^{(k,\alpha)}(l-j)=\begin{cases} b_{\Omega,0}^{(k,\alpha)}(c k)+\alpha^{k-1} b_{\Omega,0}^{(k,\alpha)}((c-1)k+1)=0,&\mbox{if $d=0$;}\\ \alpha^{d-1} b_{\Omega,0}^{(k,\alpha)}(ck+1)+
\alpha^{d}b_{\Omega,0}^{(k,\alpha)}(ck)=0,&\mbox{if $d\geq 1$}.\end{cases}\nonumber\ee
We proceed from (\ref{Epsilon-before-removing-s=0}), after noticing that the terms corresponding to $j=0$ can be removed too:
\bey
\mathcal E^{(k,\alpha)}_\Omega(u,\lambda,N)&=&\sum_{l=2}^\infty\frac{-l}{u^{l+1}}\sum_{j=0}^{l\wedge (k-1)}\alpha^j b_{\Omega,0}^{(k,\alpha)}(l-j)\sum_{s=1}^\infty \frac{\left(i\lambda j \frac{\log u}{\log N}\right)^s}{s!}=\nonumber\\
&=&\sum_{l=2}^\infty\frac{-l}{u^{l+1}}\sum_{j=1}^{l\wedge (k-1)}\alpha^j b_{\Omega,0}^{(k,\alpha)}(l-j)\left(e^{i\lambda j\frac{\log u}{\log N}}-1\right)=\nonumber\\
&=&\sum_{j=1}^{k-1}\alpha^j\left(e^{i\lambda j\frac{\log u}{\log N}}-1\right)\sum_{l=j+1}^\infty\frac{-l\, b_{\Omega,0}^{(k,\alpha)}(l-j)}{u^{l+1}}=\nonumber\\
&=&\sum_{j=1}^{k-1}\frac{\alpha^j\left(e^{i\lambda j\frac{\log u}{\log N}}-1\right)}{u^{j+2}}O(1), 
\nonumber
\eey
where the constant implied by the $O$-notation depends on $j$, $k$ and $\alpha$.

\end{proof}

\subsection{The main step}
We can write
\bey
\varphi_{\Omega,N}^{(k,\alpha)}(\lambda)&=&\sum_{x\in\frak X_N^{(k)}}\exp\!\left(i\lambda\sum_{p\leq N}
\nu(p)\frac{\log p}{\log N}\right) P_{\Omega,N}^{(k,\alpha)}(\{x\})
=\nonumber\\
&=&\sum_{\epsilon(p_1),\ldots,\epsilon(p_{\pi(N)})
\in\{0,1,\ldots,k-1\}
}\prod_{p\leq N}\exp\!\left(i\lambda \epsilon(p)\frac{\log p}{\log N}\right)P_{\Omega,N}^{(k,\alpha)}(\{\nu(p)=\epsilon(p)\})=\nonumber\\
&=&\prod_{p\leq N}  \sum_{t=0}^{k-1} e^{i\lambda t\frac{\log p}{\log N}}P_{\Omega,N}^{(k,\alpha)}(\{\nu(p)=t\})
.\nonumber
\eey
Now, by Lemma \ref{lem-distri-nu_j} and (\ref{def-F_Omega,t}
), we get
\bey
\varphi_{\Omega,N}^{(k,\alpha)}(\lambda)&=&\prod_{p\leq N}\left(F_{\Omega,0}^{(k,\alpha)}(p)+F_{\Omega,1}^{(k,\alpha)}(p)e^{i\lambda\frac{\log p}{\log N}}+\sum_{t=2}^{k-1}e^{i\lambda t\frac{\log p}{\log N}}F_{\Omega,t}^{(k,\alpha)}(p)\right)=\nonumber\\
&=&\prod_{p\leq N}\left(1+\frac{\alpha}{p}\left(e^{i\lambda\frac{\log p}{\log N}}-1\right)+\sum_{t=2}^{k-1} G_{\Omega,t,N}^{(k,\alpha)}(p,\lambda)\right),\label{pf-main-thm-1}
\eey
where
\bey
G_{\Omega,t,N}^{(k,\alpha)}(u,\lambda)=\begin{cases}
F_{\Omega,0}^{(k,\alpha)}(u)-1+\frac{\alpha}{u}+\left(F_{\Omega,1}^{(k,\alpha)}(u)-\frac{\alpha}{u}\right)e^{i\lambda\frac{\log u}{\log N}}+F_{\Omega,2}^{(k,\alpha)}(u)e^{2i\lambda\frac{\log u}{\log N}},& t=2;\\
F_{\Omega,t}^{(k,\alpha)}(u)e^{i\lambda t\frac{\log u}{\log N}},&3\leq t\leq k-1.
\end{cases}\nonumber
\eey
Lemmata \ref{structure-Laurent-coeffcients-b_Omega-1} and \ref{structure-Laurent-coeffcients-b_Omega-2} 
imply 
that 
$G_{\Omega,t,N}^{(k,\alpha)}(u,\lambda)=O(1/u^t) 
$, for $2
\leq t\leq k-1$, where the constants implied by the $O$-notations depend only on 
$k$ and $\alpha$.
This means, in particular, that there exists $d_*=d_*(k,\alpha)$ such that for every $\lambda\in\R$, every $N\geq 2$ and every $p>d_*$, the complex numbers $$z(p)=z^{(k,\alpha)}(p,\lambda,N)=1+\frac{\alpha}{p}\left(e^{i\lambda\frac{\log p}{\log N}}-1\right)+\sum_{t=2}^{k-1}G_{\Omega,t,N}^{(k,\alpha)}(p,\lambda)$$ belong to the open disk of radius $1/2$, centered at $1$. Thus we can write $\log z(p)=\log|z(p)|+i\arg z(p)$ and choose a branch of the logarithm consistently for all $p>d_*$. Let us also assume that $d_*>|\alpha|$ (we shall use this fact later). We get 
\be
\varphi_{\Omega,N}^{(k,\alpha)}(\lambda)=\prod_{p\leq d_{*}}z^{(k,\alpha)}(p,\lambda,N)\cdot \tilde\varphi_{\Omega,N}^{(k,\alpha)}(\lambda) \label{varphi=prod-cdot-tilde-varphi}
\ee
where, after taking the logarithm,
\bey
\log\tilde\varphi_{\Omega,N}^{(k,\alpha)}(\lambda)&=&\sum_{d_*<p\leq N}\log z^{(k,\alpha)}(p,\lambda,N)\nonumber.
\eey
Abel summation yields
\bey
\log \tilde\varphi_{\Omega,N}^{(k,\alpha)}(\lambda)=\pi(N)\mathcal L(N,\lambda,N)-\pi(d_*)\mathcal L(d_*,\lambda,N)-\int_{d_*}^N\pi(u)\frac{\partial }{\partial u}\mathcal L(u,\lambda,N)\de u,\label{pf-main-thm-Abel}
\eey
where $\mathcal L(u,\lambda,N)=\mathcal L^{(k,\alpha)}(u,\lambda,N)=\log z^{(k,\alpha)}(u,\lambda, N)$. 

\begin{remark}
As pointed out by A.J. Hildebrand to the author, one can try to estimate (\ref{pf-main-thm-1}) using a result by Tenenbaum (\cite{Tenenbaum-book}, Chapter III.5). This approach will be the subject of future investigation. Likely, it will allow for a wider range for $R$ in Theorem \ref{thm-1} and reduce the error term in the case of small $\eta$.
\end{remark}

\subsection{The boundary terms}
Let us estimate the first two terms in the rhs of (\ref{pf-main-thm-Abel}). By Lemmata \ref{structure-Laurent-coeffcients-b_Omega-1}-\ref{structure-Laurent-coeffcients-b_Omega-2} 

\bey
\pi(N)\mathcal L(N,\lambda,N)
&=& \pi(N)
\log\!\left(1+\frac{\alpha}{N}\left(e^{i\lambda}-1\right)+\sum_{t=2}^{k-1}G_{\Omega,t,N}^{(k,\alpha)}(N,\lambda)\right)=\nonumber\\
&=&O\!\left(\frac{N}{\log N}\right)\log\!\left(1+O\!\left(\frac{1}{N}\right)(e^{i\lambda}-1)+O\!\left(\frac{1}{N^2}\right)(1+e^{i\lambda}+e^{2i\lambda})+\right.\nonumber\\
&&\left.+\sum_{t=3}^{k-1}O\!\left(\frac{1}{N^t}\right)e^{i t \lambda}\right)=O\!\left(\frac{1}{\log N}\right),\label{O-estimate-boundary-term-1}\\
\pi(d_*)\mathcal L(d_*,\lambda,N)&=&\pi(d_*)\log\!\left(1+\frac{\alpha}{d_*}\left(e^{i\lambda\frac{d_*}{\log N}}-1\right)+O\!\left(\frac{1}{N^2}\right)\right)= O\!\left(\varepsilon\right)+ O\!\left(\tfrac{1}{N^2}\right) 
\label{O-estimate-boundary-term-2}
,\eey
where $d_*$ is as above, and  the constants implied by the $O$-notation depend only on 
$k$ and $\alpha$. The two boundary terms in the rhs of (\ref{pf-main-thm-Abel}) are therefore $O\!\left(\tfrac{1}{\log N}\right)+O(\varepsilon)$ 
as $N\to\infty$.

\subsection{The integral term}
Let us now estimate the integral in the rhs of (\ref{pf-main-thm-Abel}).
Recall the functions $G_{\Omega,t,N}^{(k,\alpha)}(u,\lambda)$ and $\mathcal E^{(k,\alpha)}_{\Omega}(u,\lambda,N)$ introduced above. We have
\bey
&&\frac{\partial}{\partial u}\mathcal L(u,\lambda,N)=\frac{1}{z^{(k,\alpha)}(u,\lambda,N)}\left(-\frac{\alpha}{u^2}\left(e^{i\lambda\frac{\log u}{\log N}}-1\right)+\frac{i\alpha \lambda
}{u^2 \log N}e^{i\lambda\frac{\log u}{\log N}}+\frac{\de}{\de u}F_{\Omega,0}^{(k,\alpha)}(u)-\frac{\alpha}{u^2}+\right.\nonumber\\
&&\left.+\left(\frac{\de}{\de u}F_{\Omega,1}^{(k,\alpha)}(u)+\frac{\alpha}{u^2}\right)e^{i\lambda\frac{\log u}{\log N}}+\frac{i\lambda
}{u \log N}\left(F_{\Omega,1}^{(k,\alpha)}(u)-\frac{\alpha}{u}\right)e^{i\lambda\frac{\log u}{\log N}}+\right.\nonumber\\
&&\left.+\sum_{t=2}^{k-1}\left(\frac{\de }{\de u}F_{\Omega,t}^{(k,\alpha)}(u)e^{i\lambda t\frac{\log u}{\log  N}}+ \frac{i\lambda\, t
}{u\log N}F_{\Omega,t}^{(k,\alpha)}(u)e^{i\lambda t\frac{\log u}{\log N}}\right)\right)=\nonumber\\
&&=\left(1-\frac{\frac{\alpha}{u}\left(e^{i\lambda\frac{\log u}{\log N}}-1\right)+\sum_{t=2}^{k-1}G_{\Omega,t,N}^{(k,\alpha)}(u,\lambda)}{z^{(k,\alpha)}(u,\lambda,N)}\right)\left(-\frac{\alpha}{u^2}\left(e^{i\lambda\frac{\log u}{\log N}}-1\right)+ \frac{i\alpha \lambda}{u^2\log N}e^{i\lambda\frac{\log u}{\log N}}+\right.\nonumber\\
&&\left.+\mathcal E_\Omega^{(k,\alpha)}(u,\lambda,N)+
\frac{\lambda 
}{u^3\log N}O(1)+
\sum_{t=2}^{k-1}
\frac{\lambda 
}{u^{t+1}\log N}O(1)\right)
\label{derivative-L-0}\eey

 Since $d_*\leq u\leq N$, let us notice that $\frac{\alpha}{u}\left(e^{i\lambda\frac{\log u}{\log N}}-1\right)\frac{1}{z(u,\lambda,N)}=
 \frac{1}{u}\left(e^{i\lambda\frac{\log u}{\log N}}-1\right)O(1)$. 
Moreover, $\frac{1}{z(u,\lambda, N)}\sum_{t=2}^{k-1}G_{\Omega,t,N}^{(k,\alpha)}(u,\lambda)=O\!\left(\frac{1}{u^2}\right)$. Thus, the first bracket in (\ref{derivative-L-0}) can be written as $I_{1,1}+I_{1,2}+I_{1,3}$, where $I_{1,j}=I_{\Omega,1,j}^{(k,\alpha)}(u,\lambda,N)$, $j=1,2,3$, and  
 \bey
 I_{1,1}&=&1,\nonumber\\
I_{1,2}&=&
\frac{O(1)}{u}\left(e^{i\lambda\frac{\log u}{\log N}}-1\right),\nonumber\\
I_{1,3}&=& 
O\!\left(\frac{1}{u^2}\right).\nonumber
 \eey
 Let us look at the second bracket in (\ref{derivative-L-0}). 
By Lemma (\ref{mathcal-E^k-alpha_Omega(u,lambda,N)})
 we get $k-1$ terms of the form $\frac{O(1)}{u^{j+2}}\left(e^{i\lambda j \frac{\log u}{\log N}}-1\right)$, $1\leq j\leq k-1$ (it would not be enough to estimate them as $O(1/u^{j+2})$ as we want a better control of error terms).  The implied constants can be chosen in order to not depend upon $j$ but only on $k$ and $\alpha$. Let us also combine the last two terms in (\ref{derivative-L-0}) into $O\!\left(\frac{\varepsilon}{u^3}\right)$. 
This means that the the second bracket in (\ref{derivative-L-0}) can be written as $I_{2,1}+I_{2,2}+\ldots+I_{2,k+2}$, where $I_{2,j}=I_{\Omega,2,j}^{(k,\alpha)}(u,\lambda,N)$, $1\leq j\leq k+2$, and
\bey
I_{2,1}&=&-\frac{\alpha}{u^2}\left(e^{i\lambda\frac{\log u}{\log N}}-1\right),\nonumber\\ 
I_{2,2}&=&\frac{i\alpha\varepsilon}{u^2}e^{i\lambda\frac{\log u}{\log N}},\nonumber\\ 
I_{2,j}&=&\frac{O(1)\left(e^{i\lambda(j-2)\frac{log u}{\log N}}-1\right)}{u^{j}},\hspace{.4cm}3\leq j\leq k+1,\nonumber\\ 
I_{2,k+2}&=&
O\!\left(\frac{\varepsilon}{u^3}\right).\nonumber 
\eey

Recall that all the constants implied by the above $O$-notations 
depend only on $
k,\alpha$, and not on $\lambda$, $u$, and $N$. 
Let us also write \be\pi(u)=\frac{u}{\log u}+O\!\left(\frac{u}{\log^2 u}\right)=I_{0,1}+I_{0,2}.\nonumber\ee 
The integral in the rhs of (\ref{pf-main-thm-Abel}) becomes now 

\bey
-\int_{d_*}^N \pi(u)\frac{\partial}{\partial u}\mathcal L(u,\lambda, N)\de u&=&-\sum_{j=1}^2\sum_{j'=1}^3\sum_{j''=1}^{k+2} \int_{d_*}^N I_{0,j}I_{1,j'}I_{2,j''}\de u\label{12integrals}.
\eey

We claim that, amongst the $6k+12$ integrals in (\ref{12integrals}), the one corresponding to $j=j'=j''=1$ is the main term, 
and the remaining $6k+11$ integrals are $O\!\left(\frac{1}{\log N}\right)+O(\varepsilon\log|\varepsilon|)$. 
Let us perform the change of variables $v=\frac{\log u}{\log N}$. We have
\bey
J_{1,1,1}&=&-
\int_{d_*}^N I_{0,1}I_{1,1}I_{2,1}\de u 
=\alpha \int_{d_*}^N\frac{e^{i\lambda \frac{\log u}{\log N}}-1}{u\log u}\de u=\alpha\int_{\log d_*/\log N}^{1}\frac{e^{i\lambda v}-1}{v}\de v=\nonumber\\
&=&
\alpha \int_0^1\frac{e^{i\lambda v}-1}{v}\de v+O(\varepsilon). 
\label{J111} 
\eey

The fact that 
\be J_{j,j',j''}=-\int_{d_*}^N I_{0,j}I_{1,j'}I_{2,j''}\de u=O\!\left(\frac{1}{\log N}\right)+O(\varepsilon\log|\varepsilon|)\label{J-allothers}\ee for all other values of $j,j',j''$ is shown in Appendix A.
By (\ref{pf-main-thm-Abel}-
\ref{O-estimate-boundary-term-2}, \ref{12integrals}-\ref{J-allothers}
), we have that

\be\label{estimate-log-tilde-varphi-two-cases}
\log\tilde\varphi_{*,N}^{(k,\alpha)}(\lambda)=
\alpha\int_{0}^1\frac{e^{i\lambda v}-1}{v}\de v+O\!\left(\frac{1}{\log N}\right)+O\!\left(\varepsilon\log|\varepsilon|\right). 
\ee

The first factor in the rhs of (\ref{varphi=prod-cdot-tilde-varphi}) can be written as follows as $N\to\infty$
\bey
\prod_{p\leq d_*}z_{}^{(k,\alpha)}(p,\lambda,N)&=&\prod_{p\leq d_*}\sum_{t=0}^{k-1}e^{i\lambda\frac{\log p}{\log N}}F_{\Omega,t}^{(k,\alpha)}(p)\nonumber\\ 
&=&
\prod_{p\leq d_*}\left(\sum_{t=0}^{k-1} F_{\Omega,t}^{(k,\alpha)}(p)+O(\varepsilon)\right)=1+O(\varepsilon),\label{varphi=prod-cdot-tilde-varphi-1}
\eey
where we used the fact that
\bey
\sum_{t=0}^{k-1}F_{\Omega,t}^{(k,\alpha)}(p)=\frac{\sum_{t=0}^{k-1}\alpha^t p^{k-1-t}}{p^{k-1}+\alpha p^{k-2}+\alpha^2 p^{k-3}+\ldots+\alpha^{k-1}}=1.\nonumber
\eey
Now, combining (\ref{estimate-log-tilde-varphi-two-cases}) and (\ref{varphi=prod-cdot-tilde-varphi-1}), we get 
\bey
\varphi_{\Omega,N}^{(k,\alpha)}(\lambda)&=&\left(1+O(\varepsilon)\right)\exp\!\left(\alpha\int_0^1\frac{e^{i\lambda v}-1}{v}\de v +O\!\left(\frac{1}{\log N}\right)+O\!\left(\varepsilon\log|\varepsilon|\right)\right)=\nonumber\\
&=&\exp\!\left(\alpha\int_0^1\frac{e^{i\lambda v}-1}{v}\de v\right)\left(1+O\!\left(\frac{1}{\log N}\right)+O\!\left(\varepsilon\log|\varepsilon|\right)\right)\nonumber
\eey
and this concludes the proof of Theorem \ref{pw-convergence-varphi_xi_N}.

\section{An Example}\label{section-example}
We pointed out that, for general $\alpha$ and $f$, the integral 
\be I=\int_{|\lambda|\leq R}\hat f(\lambda)\varphi^{(\alpha)}(\lambda)\de\lambda\nonumber\ee 
in (\ref{statement-thm-1}) may tend to zero as $N\to\infty$ (recall: $R=R(N)$). In this section we discuss an example where $\alpha=-1$ and the above integral is bounded away from zero as $N\to\infty$. We have the following
\begin{prop}
Let $f(u)=\bm 1_{[-1,1]}e^{-\frac{1}{1-u^2}}$. Then, for sufficiently large $N$, 
\be
\left|\,\int_{|\lambda|\leq R}\hat f(\lambda)\varphi^{(-1)}(\lambda)\de\lambda\right|\geq \frac{3}{100}.\label{statement-prop-example}
\ee
\end{prop}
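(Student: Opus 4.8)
The plan is to get an explicit handle on both $\hat f$ and $\varphi^{(-1)}$, and then show the integral is dominated by a positive main contribution near $\lambda=0$, with the tails and the truncation at $|\lambda|\le R$ controlled.

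First I would analyze $\varphi^{(-1)}(\lambda)=\exp\{-\int_0^1\frac{e^{i\lambda v}-1}{v}\,\de v\}$. Writing the exponent as $-\int_0^1\frac{\cos(\lambda v)-1}{v}\de v - i\int_0^1\frac{\sin(\lambda v)}{v}\de v$, one sees $|\varphi^{(-1)}(\lambda)|=\exp\{\int_0^1\frac{1-\cos(\lambda v)}{v}\de v\}$, which grows (like a power of $|\lambda|$, consistent with the remark that $\varphi^{(\alpha)}$ need not be bounded and the asymptotic $\varphi^{(1)}(\lambda)\sim i e^{-\gamma}/\lambda$, so $\varphi^{(-1)}(\lambda)\sim -i e^{\gamma}\lambda$). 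So $\varphi^{(-1)}$ is actually unbounded and grows linearly; the integral against $\hat f$ converges because $f$ is $C^\infty_c$, hence $\hat f$ is Schwartz and decays faster than any power. I would record a clean estimate: on $|\lambda|\le 1$, $\varphi^{(-1)}(\lambda)=1-\int_0^1\frac{e^{i\lambda v}-1}{v}\de v + O(\lambda^2)$, and in particular $\varphi^{(-1)}(0)=1$, with $\Re\varphi^{(-1)}(\lambda)$ staying close to $1$ for small $\lambda$.

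Second I would deal with $\hat f$ for $f(u)=\bm 1_{[-1,1]}e^{-1/(1-u^2)}$. The key point is that $f\ge 0$, $f$ is even, and $f$ is not identically zero, so $\hat f(\lambda)=\int_{-1}^1 f(u)e^{-i\lambda u}\de u = \int_{-1}^1 f(u)\cos(\lambda u)\de u$ is real, even, and $\hat f(0)=\int_{-1}^1 f(u)\de u =: c_0>0$ (numerically $c_0\approx 0.444$). Moreover since $|f|\le e^{-1}$ on a set of measure $2$, we get the crude bound $|\hat f(\lambda)|\le 2e^{-1}$ for all $\lambda$, and by integrating by parts twice (using that $f,f'$ vanish at $\pm1$ and controlling $\|f''\|_{L^1}$) a decay bound $|\hat f(\lambda)|\le C/\lambda^2$; one more integration by parts improves this to $|\hat f(\lambda)|\le C/\lambda^4$, enough to kill the linear growth of $\varphi^{(-1)}$ in the tails. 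The numerical constants here are the crux: I would compute (or rigorously bound) $c_0=\int_{-1}^1 e^{-1/(1-u^2)}\de u$ from below, and $\int_{|\lambda|\le\delta}|\hat f(\lambda)-c_0|\,|\varphi^{(-1)}(\lambda)-1|\de\lambda$ plus $\int_{|\lambda|>\delta}|\hat f(\lambda)\varphi^{(-1)}(\lambda)|\de\lambda$ from above, for a fixed small $\delta$, so that
\[
\left|\int_{\R}\hat f(\lambda)\varphi^{(-1)}(\lambda)\de\lambda\right|\ge 2\delta\, c_0 - (\text{small error}) \ge \tfrac{1}{20},
\]
say. Then, since $R=R(N)\to\infty$, for large $N$ we have $R>\delta$ and the difference between $\int_{|\lambda|\le R}$ and $\int_{\R}$ is at most $\int_{|\lambda|>R}|\hat f(\lambda)\varphi^{(-1)}(\lambda)|\de\lambda = O(R^{-1})\to 0$, which for $N$ large is $\le \frac{1}{100}$, giving the claimed lower bound $\frac{3}{100}$.

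The main obstacle is the quantitative bookkeeping: one must choose $\delta$ and pin down explicit numerical bounds on $c_0$, on $\int_{-1}^1|f''(u)|\de u$ (and $|f'''|$) for the Fourier decay, and on $\int_0^1\frac{1-\cos(\lambda v)}{v}\de v$ for $|\lambda|\le\delta$, all sharp enough that the positive term $2\delta c_0$ genuinely beats the accumulated errors and leaves a margin above $\frac{3}{100}$. This is where care (and possibly a rigorous numerical estimate of the integral $\int_{-1}^1 e^{-1/(1-u^2)}\de u$ and of $f''$, which involves $(4u^2-(1-u^2)^2-2u^2(1-u^2)\cdot(\text{stuff}))$-type expressions blowing up mildly near $\pm1$ but remaining $L^1$) is needed; everything else is routine. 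An alternative cleaner route for the decay: note $g(u):=e^{-1/(1-u^2)}$ extends to a $C^\infty$ function on $\R$ vanishing outside $[-1,1]$, so $\hat f$ decays faster than any polynomial and one may simply use $|\hat f(\lambda)|\le C_m(1+|\lambda|)^{-m}$ for $m=2$ with an explicit $C_2$ obtained from $\|f''\|_{L^1}\le$ an explicit constant, which suffices.
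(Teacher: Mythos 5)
Your overall strategy --- a positive ``main term'' $2\delta c_0$ from a small neighbourhood of $\lambda=0$, minus the absolute value of everything else --- cannot succeed, and this is a structural gap rather than a matter of bookkeeping. By your own formula, $|\varphi^{(-1)}(\lambda)|=\exp\{\int_0^1\frac{1-\cos(\lambda v)}{v}\,\de v\}\geq 1$ for \emph{every} $\lambda$, so the error you propose to subtract obeys
\be
\int_{|\lambda|>\delta}\left|\hat f(\lambda)\varphi^{(-1)}(\lambda)\right|\de\lambda\;\geq\;\int_{|\lambda|>\delta}|\hat f(\lambda)|\,\de\lambda\;\geq\;\int_{\R}\hat f(\lambda)\,\de\lambda-2\delta\,\hat f(0).\nonumber
\ee
In your normalization $\hat f(0)=\int f=c_0\approx 0.444$ while $\int_\R\hat f\,\de\lambda=2\pi f(0)=2\pi e^{-1}\approx 2.31$, so the quantity $2\delta c_0-\int_{|\lambda|>\delta}|\hat f\varphi^{(-1)}|$ is at most $4\delta c_0-2\pi/e$, which is negative unless $\delta\gtrsim 1.3$; and for such $\delta$ the approximations $\hat f\approx c_0$ and $\varphi^{(-1)}\approx 1$ on $|\lambda|\leq\delta$ are far from valid (also, your remainder $\int|\hat f-c_0|\,|\varphi^{(-1)}-1|$ is not the correct error: the cross terms $\int|\hat f-c_0|\,|\varphi^{(-1)}|$ and $c_0\int|\varphi^{(-1)}-1|$ appear separately, and these are not small either). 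The underlying obstruction is that the integral is small ($\approx 0.03$) precisely because of cancellation between the positive and negative lobes of $\hat f\,\varphi^{(-1)}$ over a range of order several units; no argument that takes absolute values outside a small neighbourhood of the origin can see this cancellation. (Minor additional slips: three integrations by parts give $|\hat f(\lambda)|\leq C/|\lambda|^3$, not $C/\lambda^4$; and a bound $C/\lambda^2$ alone would make the tail integrand non-integrable against the linear growth of $\varphi^{(-1)}$.)

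The paper's proof is necessarily computational, and it rests on two points you do not use. First, since $\hat f$ is real one has $|I|\geq|\Re I|=|\int\hat f\,\Re\varphi^{(-1)}|$, and although $|\varphi^{(-1)}(\lambda)|$ grows linearly, its \emph{real part} is bounded: $\Re\varphi^{(-1)}(\lambda)=e^{\gamma-\mathrm{Ci}(\lambda)}\lambda\cos(\mathrm{Si}(\lambda))$ satisfies $|\Re\varphi^{(-1)}|\leq e^\gamma$ (only the imaginary part is unbounded when $\alpha=-1$). Second, the integral $\int_{|\lambda|\leq 5}\hat f\,\Re\varphi^{(-1)}$ is evaluated by a rigorous midpoint quadrature ($\approx 0.2382$, with error $\leq 5.3\cdot 10^{-6}$ controlled by $|F''|\leq\tfrac12$), and the range $|\lambda|>5$ is bounded using the genuinely superpolynomial decay $|\hat f(\lambda)|\leq\frac{3}{2\pi}e^{-\sqrt{\lambda}}\lambda^{-3/4}$ from stationary phase, giving $\leq 0.2078$. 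The claimed $3/100$ is the difference of these two nearly equal numbers. To repair your argument you would have to replace the ``bump at zero'' step by an explicit, rigorously error-controlled numerical evaluation over a window such as $[-5,5]$, together with a sharp enough tail bound; the soft ingredients you list cannot produce the bound.
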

\begin{proof}
Since $S_{\Omega, f}(k,-1;N)$ is real, then by Theorem \ref{thm-1}, $\Im I 
=o(1)$ as $N\to\infty$. Thus, for every $\delta>0$, there exists $N_\delta$ such that $|\Im I|\leq \delta$ for every $N\geq N_\delta$. Let us now focus on  $\Re I$. Notice that $\hat f$ is real-valued and thus $\Re I=\int_{|\lambda|\leq R}\hat f(\lambda)\Re \varphi^{(-1)}(\lambda)\de\lambda$. 
We can write
\be\Re\varphi^{(-1)}(\lambda)=e^{\gamma-\mathrm{Ci}(\lambda)}\lambda\cos\left(\mathrm{Si}(\lambda)\right)\label{example-Re-varphi-1(lambda)}
\ee
where $\mathrm{Ci(\lambda)}=-\int_\lambda^\infty \frac{\cos t\de t}{t}$ and $\mathrm{Si}(\lambda)=\int_0^\lambda\frac{\sin t\de t}{t}$. 
We can use the expression (\ref{example-Re-varphi-1(lambda)}) to obtain the estimate\footnote{Although $|\varphi^{(-1)}(\lambda)|=O(\lambda)$ as $\lambda\to\infty$, only the imaginary part of the function $\varphi^{(-1)}$ is unbounded. This property holds true only for $\alpha=-1$.} $|\Re\varphi^{(-1)}(\lambda)|\leq e^\gamma$, valid for all $\lambda\in\R$. 
Let us write 
\be \Re I=\int_{|\lambda|\leq R}\hat f(\lambda)\Re\varphi^{(-1)}(\lambda)\de\lambda=\int_{|\lambda|\leq r}+\int_{r<|\lambda|\leq R}=I_1+I_2\label{two-integrals-example}\ee
where $r>0$ does not depend on $N$ and will be chosen later.
The idea is that $I_1$ can be estimated numerically with arbitrary precision using, for example, a quadrature method for the integral. More precisely, let $F(\lambda)=\hat f(\lambda)\Re\varphi^{(-1)}(\lambda)$ and observe that $F(\lambda)=F(-\lambda)$, so that $\int_{|\lambda|\leq r}F(\lambda)\de\lambda=2\int_0^rF(\lambda)\de\lambda$. We have

\be \int_0^rF(\lambda)\de\lambda=h\sum_{m=1}^MF\!\left(h(m-\tha)\right)+\mathcal E(r,M),\nonumber\ee
where $h=\frac{r}{M}$ and $\mathcal E(r,M)=\frac{r h^2}{24}F''(\rho)$ for some $0<\rho<r$. The graphs of $F$, $F'$ and $F''$ are shown in Figure \ref{fig-FF'F''}. One can see that $|F''(\lambda)|\leq \tha$.
\begin{figure}[ht!]
\begin{center}
\includegraphics[width=17cm]{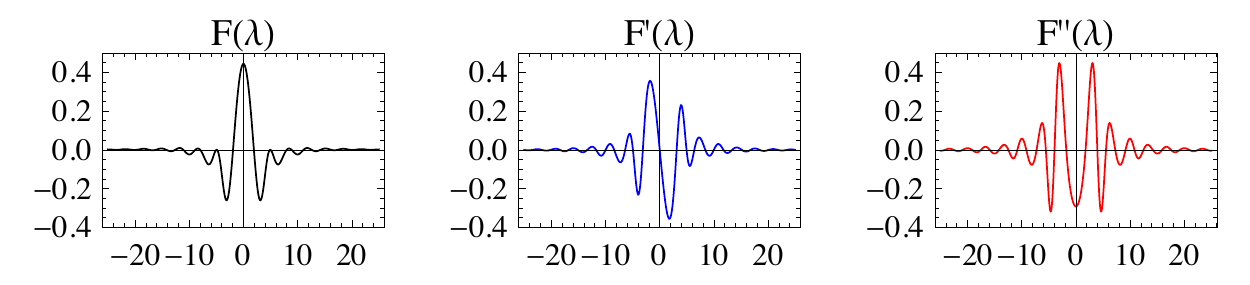}
\caption{The graphs of $F$, $F'$ and $F''$.}\label{fig-FF'F''}
\end{center}
\end{figure}
Since $F$ can be estimated to an arbitrary precision, we can estimate the Riemann sum $h\sum_{m=1}^MF\!\left(h(m-\tha)\right)$ arbitrarily well for fixed $r$ and $M$, and we have the estimate $|\mathcal E(r,M)|\leq \frac{h^2 r}{48}$.
The integral $I_2$ in (\ref{two-integrals-example}) can be estimated explicitly as follows:
\be \left|\,\int_{r<|\lambda|\leq R}F(\lambda)\de\lambda\right|\leq 2\int_{r}^R|F(\lambda)|\de\lambda.\nonumber\ee
By means of stationary phase method, it can be shown that the function $\hat f$ satisfies the asymptotic
\be
\hat f(\lambda)\sim\frac{1}{\pi}\,\Re\left\{\sqrt{\frac{-i\pi}{\sqrt{2i}\lambda^{\frac{3}{2}}}}e^{i\lambda-\frac{1}{4}-\sqrt{2i\lambda}}\right\}\nonumber
\ee 
as $\lambda\to\infty$. 
Moreover, one can check that $|\hat f(\lambda)|\leq \frac{3}{2\pi}e^{-\sqrt \lambda}\lambda^{-3/4}$ for $\lambda\geq 1$. This implies that $|F(\lambda)|\leq \frac{3 e^\gamma}{2\pi}e^{-\sqrt \lambda}\lambda^{-3/4}$. We get
\be \left|\,\int_{r<|\lambda|\leq R}F(\lambda)\de\lambda\right|\leq \frac{3e^\gamma}{\pi }\int_{r}^Re^{-\sqrt \lambda}\lambda^{-3/4}\de\lambda
=\frac{6 e^\gamma}{\sqrt\pi}\left[\mathrm{erf}(\lambda^{1/4})\right]_{\lambda=r}^{\lambda=R}\leq \frac{6 e^\gamma}{\sqrt\pi}\,\mathrm{erfc}(r^{1/4}),\nonumber\ee
where $\mathrm{erf}(x)=\frac{2}{\sqrt \pi}\int_0^x e^{-t^2}\de t$ and $\mathrm{erfc}(x)=1-\mathrm{erf}(x)$.
Now, let $r=5$ and $M=1000$. We have
\bey
2h\sum_{m=1}^MF\!\left(h(m-\tha)\right)&=&0.23821680383626264857\pm10^{-20},\nonumber\\
\frac{h^2 r}{24}&=&5.208(3)\cdot 10^{-6},\nonumber\\
\frac{6e^\gamma}{\sqrt\pi}\mathrm{erfc}(r^{1/4})&=&0.20771652138513808389\pm10^{-20},\nonumber
\eey
and therefore
\bey\left|\int_{|\lambda|\leq R}F(\lambda)\de\lambda\right|&\geq& \left|2h\sum_{m=1}^MF\!\left(h(m-\tha)\right)\right|-\left|\mathcal E(r,M)\right|-\left|\,\int_{r<|\lambda|\leq R}F(\lambda)\de\lambda\right|\geq\frac{3}{100}\nonumber
\eey
This shows that, for sufficiently large $N$,
\be\left|\int_{|\lambda|\leq R(N)}\hat f(\lambda)\varphi^{(-1)}(\lambda)\de\lambda\right|>\frac{3}{100}+\frac{1}{2500}.\label{example-final-estimate}\ee
Now let us choose $\delta =\frac{1}{2500}$ so that $|\Im I|\leq \delta$ for sufficiently large $N$. This, together with (\ref{example-final-estimate}),  yields the desired statement (\ref{statement-prop-example}).
\end{proof}

\appendix
\section{Estimate of the error terms}

This appendix contains the estimates of the integrals $J_{j,j',j''}$ from (\ref{J-allothers}) for $1\leq j\leq 2$, $1\leq j'\leq 3$ and $1\leq j''\leq k+2$, except for $(j,j',j'')=(1,1,1)$ already 
treated in (\ref{J111}). Recall that $\varepsilon=\frac{\lambda}{\log N}$ is assumed to tend to zero as $N\to\infty$. We will assume for simplicity that $\lambda>0$.
We group error terms in different sections, according to the methods used to estimate them.
\subsection{$O(\varepsilon)$ terms} These error terms are very easy, due to the fact that $I_{2,2}$ and $I_{2,k+2}$ are $O(\varepsilon/u^2)$  and $O(\varepsilon/u^3)$ respectively and the corresponding error terms can be written as $O(\varepsilon \mathcal I)$, where $\mathcal I$ is an absolutely convergent integral. We have

\bey
J_{1,2,2}&=&-\int_{d_*}^N I_{0,1}I_{1,2}I_{2,2}\de u=O\!\left(\varepsilon\int_{d_*}^N\frac{\left(e^{i\lambda\frac{\log u}{\log N}}-1\right)e^{i\lambda\frac{\log u}{\log N}}}{u^2\log u}\de u\right)=O(\varepsilon),\nonumber\\
J_{1,3,2}&=&-\int_{d_*}^N I_{0,1}I_{1,3}I_{2,2}\de u=O\!\left(\varepsilon\int_{d_*}^N\frac{e^{i\lambda\frac{\log u}{\log N}}}{u^3\log u}\de u\right)=O(\varepsilon),\nonumber\\
J_{2,1,2}&=&-\int_{d_*}^N I_{0,2}I_{1,1}I_{2,2}\de u=O\!\left(\varepsilon\int_{d_*}^N\frac{e^{i\lambda\frac{\log u}{\log N}}}{u \log^2 u}\de u\right)=O(\varepsilon),\nonumber\\
J_{2,2,2}&=&-\int_{d_*}^N I_{0,2}I_{1,2}I_{2,2}\de u=O\!\left(\varepsilon\int_{d_*}^N\frac{\left(e^{i\lambda\frac{\log u}{\log N}}-1\right)e^{i\lambda\frac{\log u}{\log N}}}{u^2\log^2 u}\de u\right)=O(\varepsilon),\nonumber\\
J_{2,3,2}&=&-\int_{d_*}^N I_{0,2}I_{1,3}I_{2,2}\de u=O\!\left(\varepsilon\int_{d_*}^N\frac{e^{i\lambda\frac{\log u}{\log N}}}{u^3 \log^2 u}\de u\right)=O(\varepsilon),\nonumber\\
J_{1,1,k+2}&=&-\int_{d_*}^N I_{0,1}I_{1,1}I_{2,k+2}\de u=O\!\left(\varepsilon \int_{d_*}^N\frac{\de u}{u^2\log u}\right)=O(\varepsilon),\nonumber\\
J_{1,2,k+2}&=&-\int_{d_*}^N I_{0,1}I_{1,2}I_{2,k+2}\de u=O\!\left(\varepsilon\int_{d_*}^N\frac{\left(e^{i\lambda\frac{\log u}{\log N}}-1\right)}{u^3\log u}\de u\right)=O(\varepsilon),\nonumber\\
J_{1,3,k+2}&=&-\int_{d_*}^N I_{0,1}I_{1,3}I_{2,k+2}\de u=O\!\left(\varepsilon\int_{d_*}^N\frac{\de u}{u^4\log u}\right)=O(\varepsilon),\nonumber\\
J_{2,1,k+2}&=&-\int_{d_*}^N I_{0,2}I_{1,1}I_{2,k+2}\de u=O\!\left(\varepsilon \int_{d_*}^N \frac{\de u}{u^2\log^2 u}\right)=O(\varepsilon),\nonumber\\
J_{2,2,k+2}&=&-\int_{d_*}^N I_{0,2}I_{1,2}I_{2,k+2}\de u=O\!\left(\varepsilon \int_{d_*}^N \frac{\left(e^{i\lambda\frac{\log u}{\log N}}-1\right)}{u^3\log^2 u}\de u\right)=O(\varepsilon),\nonumber\\
J_{2,3,k+2}&=&-\int_{d_*}^N I_{0,2}I_{1,3}I_{2,k+2}\de u=O\!\left(\varepsilon \int_{d_*}^N \frac{\de u}{u^4\log^2 u}\right)=O(\varepsilon).\nonumber
\eey

\subsection{$O(\varepsilon\log\varepsilon)$ terms}
The analysis of these error terms yields estimates of the form $O\!\left(\frac{1}{\log N}\right)+O(\varepsilon\log\varepsilon)$.
We shall need the special function 
\bey
\mathrm{Ei}(z)=-\int_{-z}^\infty \frac{e^{-t}}{t}\de t, \hspace{.3cm}z\in\R,\nonumber
\eey
where the integral is in the sense principal value due to the singularity at zero. For complex arguments $\mathrm{Ei}(z)$ is defined by analytic continuation. Notice that $v\mapsto\mathrm{Ei}(iv)$ is the antiderivative of $v\mapsto e^{iv}/v$. Moreover, for $\tau,w\in\R$, 
\bey
\mathrm{Ei}(\tau)&=&\gamma+\log \tau+O(\tau)\hspace{.3cm}\mbox{as $\tau\to0+$};\label{property-Ei-0}\\
\mathrm{Ei}(i \tau)&=&\gamma+\frac{i\pi}{2}+\log \tau+O(\tau)\hspace{.3cm}\mbox{as $\tau\to0+$};\label{property-Ei-C0}\\
\mathrm{Ei}(w)&=& \frac{e^{w}}{w}\left(1+O\!\left( \frac{1}{w}\right)\right)\hspace{.3cm}\mbox{as $w\to\infty$};\label{property-Ei-infty}\\
\mathrm{Ei}(i w)&=& i\pi-\frac{e^{i w}}{w}\left(1+O\!\left( \frac{1}{w}\right)\right) \hspace{.3cm}\mbox{as $w\to\infty$};\label{property-Ei-Cinfty}
\eey
see \cite{Abramowitz-Stegun}.
By (\ref{property-Ei-C0}, \ref{property-Ei-Cinfty}) we have
\bey
J_{1,1,2}&=&-\int_{d_*}^N I_{0,1}I_{1,1}I_{2,2}\de u=-i\alpha\varepsilon\int_{d_*/\log N}^1\frac{e^{i\lambda v}}{v}\de v=-i\alpha\varepsilon\left.\mathrm{Ei}(i\lambda v)\right|_{v=d_*/\log N}^{v=1}=\nonumber\\
&=&-i\alpha\varepsilon\left(\mathrm{Ei}(i\lambda)-\mathrm{Ei}(i\varepsilon d_*)\right)=\nonumber\\
&=&O\!\left(\varepsilon\left(O(1)-\gamma-\frac{i\pi}{2}-\log \varepsilon+\log d_*+O(\varepsilon)\right)\right)=O(\varepsilon\log\varepsilon).\nonumber 
\eey
Let $\mathrm{Ci}$ and $\mathrm{Si}$ be the special functions introduced in Section \ref{section-example}. They satisfy the estimates
\bey
i\mathrm{Ci}(y)-\mathrm{Si}(y)&=&O(1)\hspace{.3cm}\mbox{for $|y|\geq c>0$};\label{Ci+Si-infty}\\
i\mathrm{Ci}(\tau)-\mathrm{Si}(\tau)&=&i(\gamma+\log \tau)-\tau+O(\tau^2)\hspace{.2cm}\mbox{as $\tau\to0$}\label{Ci+Si-0};
\eey
see \cite{Abramowitz-Stegun}.
We have
\bey
J_{2,1,1}&=&-\int_{d_*}^N I_{0,2}I_{1,1}I_{2,1}\de u=O\!\left(\int_{d_*}^N\frac{\left(e^{i\lambda\frac{\log u}{\log N}}-1\right)}{u\log^2 u}\de u\right)=O\!\left(\left.\tilde J_{2,1,1}(x)\right|_{\log d_*}^{\log N} 
\right),\nonumber
\eey
where
\bey
\tilde J_{2,1,1}(x)&=&\frac{1}{x}-\frac{e^{i\varepsilon x}}{x}+
\varepsilon\left(i \mathrm{Ci}\!\left(\varepsilon x\right)-\mathrm{Si}\!\left(\varepsilon x\right)\right).\nonumber
\eey
By (\ref{Ci+Si-infty}, \ref{Ci+Si-0}) we get
\bey
J_{2,1,1}&=&O\!\left(\frac{1}{\log N}\right)+
O(\varepsilon\log\varepsilon).\nonumber
\eey

\subsection{Mixed terms}
Here we present the estimates for error terms of order $O(\varepsilon)+O(\frac{1}{\log N})$. In this section we assume that $3\leq j\leq k+1$.  
The following integrals can be estimated using the properties (\ref{property-Ei-0}, \ref{property-Ei-infty}) of the exponential integral function $\mathrm{Ei}$.

\bey
J_{2,1,j}&=&-\int_{d_*}^N I_{0,2}I_{1,1}I_{2,j}\de u=O\!\left(\int_{d_*}^N\frac{\left(e^{i\lambda(j-2)\frac{\log u}{\log N}}-1\right)}{u^{j-1}\log^2u}\de u\right)=
O\!\left(\left.\tilde J_{2,1,j}(x)\right|_{\log d_*}^{\log N}
\right),\nonumber\\
J_{2,2,j}&=&-\int_{d_*}^N I_{0,2}I_{1,2}I_{2,j}\de u=O\!\left(\int_{d_*}^N\frac{\left(e^{i\lambda\frac{\log u}{\log N}}-1\right)\left(e^{i\lambda(j-2)\frac{\log u}{\log N}}-1\right)}{u^j\log^2u}\de u\right)=
O\!\left(\left.\tilde J_{2,2,j}(x)\right|_{\log d_*}^{\log N}
\right),\nonumber\\
J_{2,3,j}&=&-\int_{d_*}^N I_{0,2}I_{1,3}I_{2,j}\de u=O\!\left(\int_{d_*}^N\frac{\left(e^{i\lambda(j-2)\frac{\log u}{\log N}}-1\right)}{u^{j+1}\log^2 u}\de u\right)=
O\!\left(\left.\tilde J_{2,3,j}(x)\right|_{\log d_*}^{\log N} 
\right),\nonumber
\eey
where
\bey
\tilde J_{2,1,j}(x)&=&\frac{1}{x}\left(e^{-(j-2)x}-e^{-(j-2)\left(1-i\varepsilon\right)x}\right)+\nonumber\\
&&+
(j-2)\mathrm{Ei}\!\left(-(j-2)x\right)-(j-2)\!\left(1-i\varepsilon \right)\mathrm{Ei}\!\left(-(j-2)\!\left(1-i\varepsilon\right)x\right),\nonumber\\
\tilde J_{2,2,j}(x)&=&\frac{1}{x}\left(e^{-\left(j-1-i\varepsilon\right)x}-e^{-(j-1)\left(1-i\varepsilon \right)x}+e^{-\left(j-1-(j-2)i\varepsilon\right)x}-e^{-(j-1)x}\right)+\nonumber\\
&&+\left(j-1-i\varepsilon\right)\mathrm{Ei}\!\left(-\left(j-1-i\varepsilon\right)x\right)-\nonumber\\
&&-(j-1)\left(1-i\varepsilon\right)\mathrm{Ei}\!\left(-(j-1)\left(1-i\varepsilon\right)x\right)+\nonumber\\
&&+\left(j-1-(j-2)i\varepsilon\right)\mathrm{Ei}\!\left(-\left(j-1-(j-2)i\varepsilon\right)x\right)-\nonumber\\
&&-(j-1)\mathrm{Ei}(-(j-1)x),\nonumber
\\
\tilde J_{2,3,j}(x)&=&\frac{1}{x}\left(e^{-jx}-e^{-\left(j-(j-2)i\varepsilon\right)x}\right)+\nonumber\\
&&+j \mathrm{Ei}(-jx)-\left(j-(j-2)i\varepsilon\right)\mathrm{Ei}\!\left(-\left(j-(j-2)i\varepsilon\right)x\right).\nonumber
\eey
By (\ref{property-Ei-0}) and (\ref{property-Ei-infty}) we get
\bey
J_{2,1,j}&=&O\!\left(\frac{1}{N^{j-2}\log N}\right)+O\!\left(\varepsilon\right),\nonumber\\ 
J_{2,2,j}&=&O\!\left(\frac{1}{N^{j-1}\log N}\right)+O\!\left(\varepsilon^2\right),\nonumber\\
J_{2,3,j}&=&O\!\left(\frac{1}{N^j\log N}\right)+O\!\left(\varepsilon\right).\nonumber
\eey

For the remaining error terms we shall need the incomplete gamma function
\bey
\Gamma(a,z)&=&\int_z^\infty t^{a-1}e^{-t}\de t, \hspace{.3cm}z\in\R,
\eey
and defined for complex $z$ by analytic continuation. We will only need the cases $a=0$ and $a=-1$. It satisfies, for $w,\tau\in\R$ and $c,z\in\C$, 
\bey
\Gamma(0,w)&=&e^{-w}\left(\frac{1}{w}+O\!\left(\frac{1}{w^2}\right)\right),\hspace{.5cm}w\to\infty;\label{asymptotic-Gamma(0,w)}\\
\Gamma(a,c+z\tau )&=&\Gamma(a,c)-\frac{e^{-c}}{c^{1-a}} z\tau +O(\tau^2),\hspace{.5cm}\tau\to 0;\label{asymptotic-Gamma(0,c+z)}
\eey
see \cite{Abramowitz-Stegun}.
We have
\bey
J_{1,1,j}&=&-\int_{d_*}^N I_{0,1}I_{1,1}I_{2,j}\de u=O\!\left(\int_{d_*}^N\frac{\left(e^{i\lambda(j-2)\frac{\log u}{\log N}}-1\right)}{u^{j-1}\log u}\de u\right)=
O\!\left(\left.\tilde J_{1,1,j}(x)\right|_{\log d_*}^{\log N}
\right),\nonumber\\
J_{1,2,j}&=&-\int_{d_*}^N I_{0,1}I_{1,2}I_{2,j}\de u=O\!\left(\int_{d_*}^N\frac{\left(e^{i\lambda\frac{\log u}{\log N}}-1\right)\left(e^{i\lambda(j-2)\frac{\log u}{\log N}}-1\right)}{u^{j}\log u}\de u\right)=
O\!\left(\left.\tilde J_{1,2,j}(x)\right|_{\log d_*}^{\log N}
\right),\nonumber\\
J_{1,3,j}&=&-\int_{d_*}^N I_{0,1}I_{1,3}I_{2,j}\de u=O\!\left(\int_{d_*}^N\frac{\left(e^{i\lambda(j-2)\frac{\log u}{\log N}}-1\right)}{u^{j+1}\log u}\de u\right)=
O\!\left(\left.\tilde J_{1,3,j}(x)\right|_{\log d_*}^{\log N}
\right),\nonumber\\
J_{1,2,1}&=&-\int_{d_*}^N I_{0,1}I_{1,2}I_{2,1}\de u=O\!\left(\int_{d_*}^N\frac{\left(e^{i\lambda\frac{\log u}{\log N}}-1\right)^2}{u^2\log u}\de u\right)=
O\!\left(\left.\tilde J_{1,2,1}(x)\right|_{\log d_*}^{\log N}
\right),\nonumber
\eey
where 
\bey 
\tilde J_{1,1,j}(x)&=&\Gamma(0,(j-2)x)-\Gamma\!\left(0,(j-2)\left(1-i\varepsilon\right)x\right),\nonumber\\
\tilde J_{1,2,j}(x)&=&-\Gamma(0,(j-1)x)-\Gamma\!\left(0,(j-1)\left(1-i\varepsilon\right)x\right)+\nonumber\\
&&+\Gamma\!\left(0,\left(j-1-i\varepsilon\right)x\right)+\Gamma\!\left(0,\left(j-1-(j-2)i\varepsilon\right)x\right),\nonumber\\
\tilde J_{1,3,j}(x)&=&\Gamma(0,jx)-\Gamma\!\left(0,\left(j-(j-2)i\varepsilon\right)x\right),\nonumber\\
\tilde J_{1,2,1}(x)&=&-\Gamma(0,x)+2\Gamma\!\left(0,\left(1-i\varepsilon\right)x\right)-\Gamma\!\left(0,\left(1-2i\varepsilon\right)x\right).\nonumber
\eey

From (\ref{asymptotic-Gamma(0,w)}) and (\ref{asymptotic-Gamma(0,c+z)}) we get
\bey
J_{1,1,j}&=&O\!\left(\frac{1}{N^{j-2}\log N}\right)+O\!\left(\varepsilon\right),\nonumber\\
J_{1,2,j}&=&O\!\left(\frac{1}{N^{j-1}\log N}\right)+O\!\left(\varepsilon\right).\nonumber\\
J_{1,3,j}&=&O\!\left(\frac{1}{N^j\log N}\right)+O\!\left(\varepsilon\right).\nonumber\\
J_{1,2,1}&=&O\!\left(\frac{1}{N\log N}\right)+O\!\left(\varepsilon^2\right).\nonumber
\eey

The following error term estimates involve a combination of (\ref{property-Ei-0},\ref{property-Ei-infty}) and (\ref{asymptotic-Gamma(0,w)},\ref{asymptotic-Gamma(0,c+z)}).

\bey
J_{1,3,1}&=&-\int_{d_*}^N I_{0,1}I_{1,3}I_{2,1}\de u=O\!\left(\int_{d_*}^N\frac{\left(e^{i\lambda\frac{\log u}{\log N}}-1\right)}{u^3\log u}\de u\right)=
O\!\left(\left.\tilde J_{1,3,1}(x)\right|_{\log d_*}^{\log N}
\right),\nonumber\\
J_{2,2,1}&=&-\int_{d_*}^N I_{0,2}I_{1,2}I_{2,1}\de u=O\!\left(\int_{d_*}^N\frac{\left(e^{i\lambda\frac{\log u}{\log N}}-1\right)^2}{u^2 \log^2 u}\de u\right)=
O\!\left(\left.\tilde J_{2,2,1}(x)\right|_{\log d_*}^{\log N}
\right),\nonumber\\
J_{2,3,1}&=&-\int_{d_*}^N I_{0,2}I_{1,3}I_{2,1}\de u=O\!\left(\int_{d_*}^N\frac{\left(e^{i\lambda\frac{\log u}{\log N}}-1\right)}{u^3 \log^2 u}\de u\right)=
O\!\left(\left.\tilde J_{2,2,1}(x)\right|_{\log d_*}^{\log N}
\right),\nonumber
\eey
where
\bey
\tilde J_{1,3,1}&=&-\mathrm{Ei}\!\left(-2x\right)-\Gamma\!\left(0,\left(2-i\varepsilon\right)x\right),\nonumber\\
\tilde J_{2,2,1}&=& -\mathrm{Ei}(-x)+2\left(1-i\varepsilon\right)\Gamma\!\left(-1,\left(1-i\varepsilon\right)x\right)-
\left(1-2i\varepsilon\right)\Gamma\!\left(-1,\left(1-2i\varepsilon\right)x\right)-\frac{1}{x e^x},\nonumber\\
\tilde J_{2,3,1}&=&2\mathrm{Ei}\!\left(-2x\right)+\frac{1}{xe^{2x}}-\left(2-i\varepsilon\right)\Gamma\!\left(-1,\left(2-i\varepsilon\right)x\right).\nonumber
\eey
We get
\bey
J_{1,3,1}&=&O\!\left(\frac{1}{N\log N}\right)+O\!\left(\varepsilon\right),\nonumber\\
J_{1,2,2}&=&O\!\left(\frac{1}{N\log^2 N}\right)+O\!\left(\varepsilon^2\right),\nonumber\\
J_{2,3,1}&=&O\!\left(\frac{1}{N}\right)+O\!\left(\varepsilon\right).\nonumber
\eey

Combining all the error terms discussed in this appendix we obtain the desired estimate (\ref{J-allothers}).

\addcontentsline{toc}{section}{Bibliography}
\bibliographystyle{plain}
\bibliography{bibliography-NSF}
\end{document}